\setlist{parsep=1pt} 
\DeclareRobustCommand{\RobustRSoWVnm}{{\RSoWV_{nm}}}
\def\unit{\vone}
\def\Rplus{\boldsymbol{R^+}\hspace{-.1em}}
\def\normDgamma#1{|\!|\!|#1|\!|\!|}
\def\seminormff#1#2{|\!|\!|#1\,\mathord{;}\,#2|\!|\!|}
\def\KQ{K^Q}
\def\KQhat{\hat K^Q}
\DeclareMathOperator{\vspan}{span}
\DeclareMathOperator{\Id}{Id}           
\def\fraks{\mathfrak{s}}
\def\fraK{\mathfrak{K}}
\def\abss#1{\abs{#1}_\mathfrak{s}}
\def\Tbar{\overline T}
\newcommand{\smallRSV}{\scalebox{0.7}{\RSV}}
\newcommand{\smallRSW}{\scalebox{0.7}{\RSW}}
\DeclareMathOperator{\D}{D}            
\newcommand{\setnm}{\mathfrak{N}}
\newcommand{\sumnm}{\sum_{(n,m)\in\setnm}}
\newcommand{\sumnmarg}[1]{\sum_{\substack{(n,m)\in\setnm\\#1}}}
\newcommand{\supnm}{\sup_{(n,m)\in\setnm}}
\begin{document}


\title{Corrigendum to \lq\lq Regularity structures \\ and renormalisation of
FitzHugh-Nagumo SPDEs \\ in three space dimensions \rq\rq}
\author{Nils Berglund and Christian Kuehn}
\date{Revised version, July 29, 2019}

\maketitle

\begin{abstract}
Lemma 4.8 in the article~\cite{BK2016} contains a mistake, which implies a
weaker regularity estimate than the one stated in Proposition 4.11. This does
not affect the proof of Theorem~2.1, but Theorems~2.2 and~2.3 only follow from
the given proof if either the space dimension $d$ is equal to $2$, or the
nonlinearity $F(U,V)$ is linear in $V$. To fix this problem and provide a proof
of Theorems~2.2 and~2.3 valid in full generality, we consider an alternative
formulation of the fixed-point problem, involving a modified integration
operator with nonlocal singularity and a slightly different regularity
structure. We provide the multilevel Schauder estimates and 
renormalisation-group analysis required for the fixed-point argument in this 
new setting.  
\end{abstract}


\section{Set-up and mistake in the original article~\cite{BK2016}}

In~\cite{BK2016}, we considered FitzHugh--Nagumo-type SPDEs on the torus 
$\T^d$, $d\in\set{2,3}$, of the form
\begin{align}
\nonumber
\partial_t u &= \Delta_x u + F(u,v) + \xi^\eps\;, \\
\partial_t v &=  a_1 u + a_2 v\;,
\label{eq:fhn01} 
\end{align} 
where $F(u,v)$ is a cubic polynomial, $\xi^\eps$ denotes 
mollified space-time white noise, and $a_1, a_2\in \R$ are scalar
parameters (in the case of vectorial $v$, $a_1$ is a vector and $a_2$ is a
square matrix). Duhamel's formula allows us to represent (mild) solutions
of~\eqref{eq:fhn01} on a bounded interval $[0,T]$ as 
\begin{align}
\nonumber
u_t &= \int_0^t S(t-s) \bigbrak{\xi^\eps_s + F(u_s,v_s)} \6s + S(t)u_0\;, \\
v_t &= \int_0^t  Q(t-s) u_s \6s + \e^{ta_2}v_0\;, 
\label{eq:fhn02} 
\end{align}
where $S$ denotes the heat semigroup and $Q(t) := a_1\e^{t a_2}\chi(t)$, where
$\chi:\R_+\to[0,1]$ is a smooth cut-off function supported on $[0,2T]$ such that
$\chi(t)=1$ for all $t\in[0,T]$. 

In~\cite{BK2016}, we used a lift of~\eqref{eq:fhn02} to a regularity structure
of the form 
\begin{align}
\nonumber
U &= (\cK_{\bar\gamma}+R_\gamma\cR)\Rplus \bigbrak{\Xi+F(U,V)} + Gu_0\;, \\
V &= \cK^Q_\gamma\Rplus U + \widehat Qv_0\;, 
\label{eq:fix00} 
\end{align}
where $\cK_{\bar\gamma}$ is the standard lift of the heat kernel 
(cf.~\cite[Sect.~5]{Hairer2014}), and $\cK^Q_\gamma$ is a new operator lifting 
time-convolution with $Q$. 

The problem is that~\cite[Lemma~4.8]{BK2016} is incorrect (it wrongly assumed
translation invariance of the model for space-time white noise). As a
consequence, \cite[Proposition~4.11]{BK2016} does not prove that $\cK^Q_\gamma$
maps $\cD^{\gamma,\eta}$ into itself for any $\gamma\in(0,\eta+2)$. Instead, it
only shows that $\cK^Q_\gamma$ maps $\cD^{\gamma,\eta}$ into
$\cD^{\gamma',\eta}$ for some $\gamma'\leqs\gamma$ that can at best be slightly
less than $1/2$. 

If we look for a fixed point of~\eqref{eq:fix00} with $U\in\cD^{\gamma,\eta}$,
we have in particular to determine the regularity of $F(U,V)$.
Let $\alpha$ be the regularity of the stochastic convolution, that is, 
\begin{equation}
 \alpha = 
 \begin{cases}
  - \kappa & \text{if $d=2$\;,} \\
  -\frac12 - \kappa & \text{if $d=3$\;.}
 \end{cases}
\end{equation} 
Using~\cite[Prop.~6.12]{Hairer2014} and $2\eta+\alpha 
\geqs 3\eta\wedge(\eta+2\alpha)$, we find that $U^3 \in \cD^{\gamma+2\alpha,
3\eta\wedge(\eta+2\alpha)}$, while 
\begin{equation}
 V \in \cD^{\gamma',\eta}\;, 
 \qquad 
 V^2 \in \cD^{\gamma'+\alpha, 2\eta\wedge(\eta+\alpha)}\;, 
 \qquad
 V^3 \in \cD^{\gamma'+2\alpha, 3\eta\wedge(\eta+2\alpha)}\;.
\end{equation} 
This implies that 
\begin{enumerate}
\item 	If $d=2$, then $F(U,V)$ is still in a space of modelled distributions
$\cD^{\gamma'+2\alpha, 3\eta\wedge(\eta+2\alpha)}$ with positive exponent
$\gamma'+2\alpha$. This is sufficient to carry out the fixed-point argument
stated in~\cite[Prop.~6.5]{BK2016}, which relies in particular
on~\cite[Thm.~7.1]{Hairer2014}, that requires this exponent to be positive.

\item 	If $d=3$ and $F(U,V)$ is linear in $V$, then $F(U,V)\in
\cD^{(\gamma+2\alpha)\wedge\gamma', 3\eta\wedge(\eta+2\alpha)}$. Since 
$(\gamma+2\alpha)\wedge\gamma' > 0$, the fixed-point argument again holds.

\item 	If $d=3$ and $F(U,V)$ contains terms in $V^2$ or $V^3$, however, we can
no longer assume that $F(U,V)$ is in a space of modelled distributions with
positive exponent, and we cannot apply~\cite[Thm.~7.1]{Hairer2014}.  
\end{enumerate}

We thus conclude that~\cite[Thm.~2.1]{BK2016}, which concerns the standard
FitzHugh--Nagumo case with $F(U,V)=U+V-U^3$, still follows from the given
proof. Theorems~2.2 and 2.3, however, are only proved if either $d=2$ or $F$
does not contain any terms in $V^2$ or $V^3$.


\section{Corrected results}

We now provide a different argument allowing to prove the results in full 
generality. Consider the system~\eqref{eq:fhn01} on the $3$-dimensional torus, 
for a general cubic nonlinearity of the form
\begin{equation}
 \label{eq:FUV}
  F(u,v) = \alpha_1 u + \alpha_2 v + \beta_1 u^2 + \beta_2 uv + \beta_3 v^2
 + \gamma_1 u^3 + \gamma_2 u^2v + \gamma_3 uv^2 + \gamma_4 v^3\;.
\end{equation}
Its renormalised version is given by 
\begin{align}
\nonumber
\partial_t u^\eps &= \Delta_x u^\eps + \bigbrak{F(u^\eps,v^\eps) + c_0(\eps) + 
c_1(\eps)u^\eps + c_2(\eps)v^\eps} + \xi^\eps\;, \\
\partial_t v^\eps &= a_1 u^\eps + a_2 v^\eps\;,
\label{eq:SPDE_renorm} 
\end{align}
where $\xi^\eps = \varrho_\eps * \xi$ is a mollification of space-time white 
noise, with mollifier $\varrho_\eps(t,x) = \eps^{-5} 
\varrho(\eps^{-2}t,\eps^{-1}x)$ for a compactly supported function $\varrho: 
\R^4\to\R$ of integral $1$. Below we provide a proof of the following result, 
which is in fact a slight generalisation of~\cite[Thm.~2.2]{BK2016}. 

\begin{theorem}
\label{thm:main} 
Assume $u_0 \in \cC^\eta$ for some $\eta>-\frac23$ and $v_0\in\cC^\gamma$ for 
some $\gamma > 1$. Then there exists a choice of constants $c_0(\eps)$, 
$c_1(\eps)$ and $c_2(\eps)$ such that the system~\eqref{eq:SPDE_renorm} 
with initial condition $(u_0,v_0)$ admits a sequence of local solutions 
$(u^\eps,v^\eps)$, converging in probability to a limit $(u,v)$ as $\eps\to0$. 
The limit is independent of the choice of mollifier~$\varrho$.  
\end{theorem}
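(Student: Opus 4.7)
The obstruction in the original argument is that the naive lift $\cK^Q_\gamma$ of the time-convolution operator $Q*\cdot$ does not preserve the exponent $\gamma$ of the spaces $\cD^{\gamma,\eta}$, and consequently $V$ does not carry enough regularity for the monomials $V^2$ and $V^3$ to be controlled when $d=3$. The plan is to replace $\cK^Q_\gamma$ by a genuinely nonlocal abstract integration operator $\widetilde\cK^Q$, acting on a suitably enriched regularity structure in which the convolution with $Q$ is treated as a kernel on space-time whose singular support is the whole hyperplane $\{t=s,\,x=y\}$ rather than a single point. I would first split $Q(t-s)$ additively as $K^Q(t-s) + R^Q(t-s)$, where $K^Q$ is compactly supported in time near $0$ and $R^Q$ is globally smooth; the smooth part contributes only a classical remainder that raises regularity by any amount and can be absorbed into the polynomial part of the structure, whereas $K^Q$ must be lifted abstractly.

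\textbf{Nonlocal Schauder estimates.} The key analytic input is a new version of the multilevel Schauder estimate, adapted to a kernel whose singularity lies along a hyperplane. Following the extension philosophy of~\cite{Hairer2014}, I would adapt the proof of Theorem 5.12 there by (i) defining an abstract operator $\widetilde\cI^Q$ whose range contains new symbols of the form $\widetilde\cI^Q(\tau)$ placed at the appropriate homogeneities, (ii) setting up a Taylor-remainder expansion of the kernel $K^Q$ that only expands in the spatial variables (since $K^Q$ does not need to be smoothed along the time direction), and (iii) verifying that $\widetilde\cK^Q$ sends $\cD^{\gamma,\eta}$ into $\cD^{\gamma,\eta}$ for any $\gamma\in(0,\eta+2)$ and any admissible $\eta$. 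Because $K^Q$ is smoothing in space, the integration operator raises spatial regularity by arbitrary amounts, so the new symbols $\widetilde\cI^Q(\tau)$ can be assigned homogeneity $|\tau|+\beta$ for any desired $\beta>0$; in practice it suffices to take $\beta$ slightly smaller than $1$ so that $V$ lies in $\cC^{\eta+\beta}$, enough to treat $V^2,V^3$ as ordinary modelled distributions.

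\textbf{Renormalisation.} Enlarging the regularity structure forces me to redo the analysis of divergent trees. The list of symbols that require renormalisation is essentially the same as in~\cite{BK2016} (namely those built from cherries over the noise symbol $\Xi$ and the heat-kernel integration $\cI$), because the new symbols $\widetilde\cI^Q(\tau)$ have strictly positive homogeneity and therefore cannot appear in divergent subtrees. I would write the renormalisation map as in the BPHZ construction, identify the three logarithmically/polynomially divergent contributions producing $c_0(\eps)$, $c_1(\eps)$, $c_2(\eps)$, and verify that the renormalised model converges in probability as $\eps\to0$ with a limit independent of the mollifier $\varrho$ by the usual stochastic-estimate/Wiener-chaos argument. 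I would also check that $\widetilde\cI^Q$ commutes with the renormalisation map (since its kernel is independent of space and $Q$ integrates to a finite quantity), so that the renormalised model matches the classical one at the level of~\eqref{eq:SPDE_renorm}.

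\textbf{Fixed point and conclusion.} With the improved Schauder bounds in hand, the abstract fixed-point problem
\begin{equation*}
 U = (\cK_{\bar\gamma}+R_\gamma\cR)\Rplus\bigbrak{\Xi + F(U,V)} + Gu_0\;,
 \qquad
 V = \widetilde\cK^Q \Rplus U + \widehat Q v_0
\end{equation*}
is now posed in $\cD^{\gamma,\eta}\times\cD^{\gamma,\eta'}$ for $\eta>-\tfrac23$, $\eta'>1$, with both components having the same regularity exponent $\gamma$. All monomials in $F$ then land in spaces $\cD^{\bar\gamma,\bar\eta}$ with $\bar\gamma>0$, so Theorem 7.1 of~\cite{Hairer2014} applies and produces a unique local solution depending continuously on the model; reconstructing via $\cR$ and passing to the limit $\eps\to0$ yields Theorem~\ref{thm:main}. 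The main technical obstacle, and where I expect the bulk of the work to lie, is the nonlocal multilevel Schauder estimate for $\widetilde\cK^Q$: unlike the heat kernel it has no scaling symmetry, its singular support is extended, and one must carefully track both the temporal cut-off at $t=0$ (entering through the $R_\gamma$-type boundary correction) and the interplay with the weight $\eta$ to obtain a bound preserving $\gamma$.
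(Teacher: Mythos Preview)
Your proposal has a genuine gap at the heart of the analytic argument. You keep the fixed-point equation in the form $V=\widetilde\cK^Q\Rplus U+\widehat Qv_0$, so your operator $\widetilde\cK^Q$ lifts the \emph{pure time-convolution} $u\mapsto Q*_t u$, whose kernel as a space-time object is $Q(t)\delta(x)$. This operator is not smoothing in space: for fixed $t$, $(Q*_t u)(t,\cdot)$ is just an average of the slices $u(s,\cdot)$ and has exactly the same spatial H\"older regularity as $u$. In particular $(\Pi_z\widetilde\cI^Q(\RSI))(\bar z)=Q*_{\bar t}(K*\xi)(\bar z)=(K^Q*\xi)(\bar z)$ still has parabolic regularity $-\tfrac12-\kappa$, so the model bounds force $|\widetilde\cI^Q(\RSI)|\leqs -\tfrac12-\kappa$; you cannot assign homogeneity $|\tau|+\beta$ with $\beta$ close to $1$. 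Consequently your Schauder estimate cannot map $\cD^{\gamma,\eta}$ into $\cD^{\gamma,\eta'}$ with $\gamma$ large enough to control $V^2$ and $V^3$; the obstruction is not the proof technique but the fact that the underlying operator gains no regularity.

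The paper's fix is structurally different: it first \emph{substitutes} the Duhamel formula for $u$ into that for $v$, obtaining $v=S^Q*[\xi+F(u,v)]+\dots$ with the composite kernel $S^Q(t)=\int_0^tQ(t-s)S(s)\6s$. The singular part $K^Q=Q*_tK$ now contains the heat kernel and is therefore genuinely regularising of order $\beta=2$; the lifted operator maps $\cD^{\gamma,\eta}\to\cD^{\gamma+2,\eta+2}$, and the fixed-point problem becomes $V=(\cK^Q_{\bar\gamma}+R^Q_\gamma\cR)\Rplus[\Xi+F(U,V)]+\dots$, symmetric to the equation for $U$. The price is that the singularity of $K^Q$ is smeared along the time axis, which forces a doubly-indexed decomposition $K^Q=\sum_{(n,m)}K^Q_{nm}$, shift operators $\cT_{h_{nm}}$ to recenter the pieces, and a countable family of new symbols $\cI^Q_{nm}\tau$ rather than a single $\widetilde\cI^Q$. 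Your intuition that the nonlocal Schauder estimate is the crux is right, but the object to which it applies must be the composite space-time kernel, not $Q$ alone.
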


This result is more general than~\cite[Thm.~2.2]{BK2016} because we do not 
assume that $\gamma_2=0$, even though we are in dimension $d=3$. The 
renormalisation constants $c_i(\eps)$ are given by 
\begin{align}
\nonumber
c_0(\eps) &= -\beta_1\bigbrak{C_1(\eps)+3\gamma_1 C_2(\eps)}\;, \\
\label{eq:c0c1c2} 
c_1(\eps) &= -3\gamma_1\bigbrak{C_1(\eps)+3\gamma_1 C_2(\eps)}\;,\\
\nonumber
c_2(\eps) &= -\gamma_2\bigbrak{C_1(\eps)+3\gamma_1 C_2(\eps)}\;,
\end{align}
where 
\begin{equation}
\label{eq:C1_C2} 
 C_1(\eps) = \int_{\R^4} G_\eps(z)^2 \6z\;, 
 \qquad 
 C_2(\eps) = 2\int_{\R^4} G(z) 
 \biggpar{\int_{\R^4} G_\eps(z_1)G_\eps(z_1-z) \6z_1}^2 \6z\;.
\end{equation} 
Here $G$ denotes the heat kernel in dimension $d=3$, and $G_\eps = 
G*\varrho_\eps$. It is known that $C_1(\eps)$ diverges as $\eps^{-1}$ while 
$C_2(\eps)$ diverges as $\log(\eps^{-1})$. 

An analogous result holds for vectorial variables $v$, in the same way as 
in~\cite[Thm.~2.3]{BK2016}, but without the restriction on $F(u,v)$ having no 
terms in $u^2 v_i$. In that case, $\gamma_2$ and $c_2(\eps)$ become row 
vectors of the same dimension as $v$. Since all arguments are virtually the 
same, we do not present here the details for this situation. 

The main idea  for proving Theorem~\ref{thm:main} is to 
replace~\eqref{eq:fhn02} by another fixed-point equation, which always involves 
convolution in space and time. The price to pay is that this leads to an 
integral kernel with a singularity that is no longer concentrated at the 
origin, but \lq\lq smeared out\rq\rq\ along the time axis. Therefore we need to 
rederive the multilevel Schauder estimates for this type of kernel, which we do 
in Section~\ref{sec:Schauder}. The resulting fixed-point argument is then 
considered in Section~\ref{sec:fixedpoint}, and the effect of renormalisation 
is addressed in Section~\ref{sec:renormalisation}. 


\section{Alternative integral equation}
\label{sec:Schauder} 

There is an alternative to using the fixed-point equation~\eqref{eq:fix00}. 
Indeed, substituting the expression for $u_t$ in~\eqref{eq:fhn02} in the
expression of $v_t$ and rearranging, we find that $v_t$ can also be represented
as
\begin{equation}
 v_t = \int_0^t S^Q(t-s) \bigbrak{\xi^\eps_s + F(u_s,v_s)} \6s 
 + S^Q(t) u_0  + \e^{ta_2}v_0\;, 
\end{equation} 
where 
\begin{equation}
 S^Q(t) = \int_0^t Q(t-s)S(s)\6s\;.
\end{equation} 
Our aim is thus to lift the operation of convolution with $S^Q$ to the
regularity structure, in order to obtain an equivalent fixed-point equation 
of the form 
\begin{align}
\nonumber
U &= (\cK_{\bar\gamma}+R_\gamma\cR)\Rplus \bigbrak{\Xi+F(U,V)} + Gu_0\;, \\
V &= (\cK^Q_{\bar\gamma}+R^Q_\gamma\cR)\Rplus \bigbrak{\Xi+F(U,V)} 
+ G^Qu_0 + \widehat Qv_0\;, 
\label{eq:fix01}
\end{align}
for some suitable kernels $\cK^Q_{\bar\gamma}$ and $R^Q_\gamma$. We already know
that $S$ is represented by convolution with a kernel $G=K+R$. Hence $S^Q$
corresponds to convolution with a kernel $G^Q=K^Q+R^Q$, where the superscript
$Q$ always indicates time-convolution with $Q$. Thus we have to define the lift
$\cK^Q_\gamma$ of $K^Q$ to the regularity structure, meaning that it should map
$\cD^{\gamma,\eta}$ into $\cD^{\bar\gamma,\bar\eta}$ for some suitable
$\bar\gamma$, $\bar\eta$ and satisfy
\begin{equation}
 \cR\cK^Q_\gamma f = K^Q * \cR f\;.
\end{equation} 


\subsection{Decomposition of the kernel}

The difficulty is that since $K^Q$ is obtained by convolution in time of $K$
with $Q$, its singularity is no longer concentrated at the origin, but is
\lq\lq smeared out\rq\rq\ along the time axis. In fact, we have the following
decomposition result replacing~\cite[Assumption~5.1]{Hairer2014}. 
Note that here and below, we write $z=(t,x)$ for space-time 
points.

\begin{prop}
\label{prop:KQnm} 
Assume $Q$ is supported on $[0,2T]$ for a given $T>0$, fix a scaling
$\fraks=(\fraks_0,\fraks_1,\dots,\fraks_d)$, and let $K$ be a regularizing
kernel of order $\beta$ (cf.~\cite[Ass.~5.1]{Hairer2014}). The kernel $K^Q$ 
obtained by convoluting $Q$ and $K$ in time can be decomposed as 
\begin{equation}
 K^Q(z) = \sum_{(n,m)\in\mathfrak{N}} K^Q_{nm}(z)\;,
\end{equation} 
where $\mathfrak{N} = \setsuch{(n,m)\in\Z^2}{n\geqs0, -1\leqs m \leqs
1+2T2^{\fraks_{0}n}}$ 
and the $K^Q_{nm}$ have the following properties.
\begin{itemize}
\item 	Let $h_{nm} = (m2^{-\fraks_0n},0)$. 
For all $n,m$, $K^Q_{nm}$ is supported on the ball
\begin{equation}
\label{eq:KQnm_1} 
 \bigsetsuch{z\in\R^{d+1}}
 {\norm{z - h_{nm}}_\fraks \leqs (1+2^{1/\fraks_0})2^{-n}}\;.
\end{equation} 

\item 	For any multiindex $k$, there exists a constant $C_Q$ such that 
\begin{equation}
\label{eq:KQnm_2} 
 \bigabs{\D^k K^Q_{nm}(z)} \leqs C_Q 2^{(\abs{\fraks}-\fraks_0-\beta+\abss{k})n}
\end{equation} 
holds uniformly over all $(n, m)\in\mathfrak{N}$ and all $z\in\R^{d+1}$. 

\item 	For any two multiindices $k$ and $\ell$, there exists a constant $C_Q$
such that 
\begin{equation}
\label{eq:KQnm_3} 
 \biggabs{\int_{\R^{d+1}}z^\ell \D^k K^Q_{nm}(z)\6z} 
 \leqs C_Q 2^{-(\beta+\fraks_0)n}
\end{equation} 
holds uniformly over all $(n, m)\in\mathfrak{N}$.
\end{itemize}
\end{prop}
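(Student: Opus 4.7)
The plan is to obtain the decomposition of $K^Q$ by combining Hairer's standard decomposition $K=\sum_{n\geqs0}K_n$ from \cite[Ass.~5.1]{Hairer2014} with a time-localized partition of unity at scale $2^{-\fraks_0 n}$. Writing $z=(t,x)$ and $e_0=(1,0,\dots,0)$, we have $K^Q(z)=\int_0^{2T} Q(s)K(z-se_0)\6s$, hence $K^Q=\sum_{n\geqs0}K^Q_n$ with $K^Q_n(z)=\int Q(s)K_n(z-se_0)\6s$. Each $K^Q_n$ is supported on a thin tube of width $2^{-n}$ stretched along the time axis over $[0,2T]$, so to recover an honest Hairer-type decomposition we slice this tube along the time direction at the natural scale. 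Concretely, fix a smooth partition of unity $\sum_{m\in\Z}\chi_{nm}(s)=1$ on $\R$ with $\chi_{nm}(s)=\chi\bigpar{2^{\fraks_0 n}(s-m2^{-\fraks_0 n})}$ for a fixed compactly supported bump $\chi$, and define
\begin{equation*}
K^Q_{nm}(z) = \int_\R \chi_{nm}(s)Q(s)K_n(z-se_0)\6s\;.
\end{equation*}
Since $Q$ is supported on $[0,2T]$, only $m\in\set{-1,0,\dots,1+2T2^{\fraks_0 n}}$ contribute, yielding exactly the index set $\setnm$.

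Next I would verify the three properties in order. For the support claim~\eqref{eq:KQnm_1}, observe that if $z\in\mathop{\rm supp}K^Q_{nm}$ there exists $s$ with $|s-m2^{-\fraks_0 n}|\leqs 2^{-\fraks_0 n}$ and $\norm{z-se_0}_\fraks\leqs 2^{-n}$; since $\norm{se_0-h_{nm}}_\fraks=|s-m2^{-\fraks_0 n}|^{1/\fraks_0}\leqs 2^{-n\phantom{1}}\cdot 2^{1/\fraks_0}$, the triangle inequality gives the bound $(1+2^{1/\fraks_0})2^{-n}$. For the derivative bound~\eqref{eq:KQnm_2}, differentiating under the integral in the $z$-variable (so $\chi_{nm}$ is untouched) gives $\D^k K^Q_{nm}(z)=\int\chi_{nm}(s)Q(s)(\D^k K_n)(z-se_0)\6s$; using Hairer's bound $|\D^k K_n(w)|\leqs C 2^{(|\fraks|-\beta+\abss{k})n}$ together with $\norm{\chi_{nm}}_\infty,\norm{Q}_\infty\leqs C$ and $s$-integration length $\sim 2^{-\fraks_0 n}$, we obtain the desired $C_Q 2^{(|\fraks|-\fraks_0-\beta+\abss{k})n}$.

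The moment bound~\eqref{eq:KQnm_3} is the one point where a little care is needed. Since $K^Q_{nm}$ has compact support by~\eqref{eq:KQnm_1}, integration by parts $|k|$ times yields
\begin{equation*}
\int_{\R^{d+1}} z^\ell \D^k K^Q_{nm}(z)\6z
= (-1)^{|k|}\int_{\R^{d+1}}\D^k(z^\ell)K^Q_{nm}(z)\6z\;,
\end{equation*}
which vanishes unless $k\leqs\ell$ componentwise, in which case it reduces to a constant times $\int z^{\ell-k}K^Q_{nm}(z)\6z$. Setting $\ell'=\ell-k$ and substituting $w=z-se_0$,
\begin{equation*}
\int z^{\ell'} K^Q_{nm}(z)\6z
= \int \chi_{nm}(s)Q(s)\int_{\R^{d+1}}(w+se_0)^{\ell'}K_n(w)\6w\6s\;.
\end{equation*}
Expanding $(w+se_0)^{\ell'}$ as a polynomial in $w$ with $s$-dependent coefficients (bounded for $s\in[0,2T]$), each monomial integral $\int w^\alpha K_n(w)\6w$ is controlled, using the support and size bounds on $K_n$, by $C 2^{-(\beta+\abss{\alpha})n}\leqs C 2^{-\beta n}$. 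Integrating this $2^{-\beta n}$ bound against $\chi_{nm}Q$ over an interval of length $\sim 2^{-\fraks_0 n}$ produces exactly $C_Q 2^{-(\beta+\fraks_0)n}$.

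The only real obstacle is bookkeeping: choosing the partition scale to match $\fraks_0$ (anything finer or coarser breaks either the support claim or the $s$-integration gain), and checking that the extra factor $2^{-\fraks_0 n}$ in~\eqref{eq:KQnm_2} and~\eqref{eq:KQnm_3} comes solely from the length of the $s$-interval on which $\chi_{nm}$ is nontrivial. Everything else is a direct consequence of Hairer's assumption on $K$ together with the boundedness and compact support of $Q$.
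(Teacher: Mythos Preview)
Your proposal is correct and follows essentially the same route as the paper: both localise the convolution variable by a smooth partition of unity at scale $2^{-\fraks_0 n}$, set $K^Q_{nm}(z)=\int\chi_{nm}(s)Q(s)K_n(z-se_0)\6s$, and then read off the three claimed properties from Hairer's bounds on $K_n$ together with the $O(2^{-\fraks_0 n})$ length of the $s$-interval. The only cosmetic difference is that for the moment bound the paper invokes Hairer's condition~(5.5) directly on $\int \bar z^{\ell'}\D^k K_n(\bar z)\6\bar z$, whereas you first integrate by parts to eliminate $\D^k$ and then deduce the same estimate from the size and support bounds on $K_n$; both arguments are valid.
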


We give the proof in Appendix~\ref{app:KQnm}. 
Note the extra $\fraks_0$ in the bound~\eqref{eq:KQnm_2}, which compensates the
fact that $m$ takes of the order of $2^{\fraks_0 n}$ values. 

\begin{remark}
We only need these results in the case $\beta=2$, and for the parabolic scaling
$\fraks=(2,1,1,1)$. However, since there is no difficulty in dealing with this
more general setting, we may as well do so here.   
\end{remark}


\subsection{Extension of the regularity structure}

In order to lift convolution with $K^Q$ to the regularity structure, 
\added[id=NB]{a 
natural idea is to} enlarge the model space of the Allen--Cahn equation
(cf.~\cite[Sec.~3 and Table~1]{BK2016}) by adding new elements of the form
$\cI^Q(\tau)$ whenever $\abss{\tau}\notin\Z$. By convention, $\cI^Q(\tau)$
then has homogeneity $\abss{\cI^Q(\tau)} = \abss{\tau} + \beta$. 

In order to extend the model, \added[id=NB]{one can then try to proceed} as 
in~\cite[Sect.~5]{Hairer2014} by first introducing functions 
\begin{equation}
\label{eq:def_JQ} 
 \cJ^Q(z)\tau = \sum_{\abss{k} < \alpha+\beta} \frac{X^k}{k!}
 \sumnm \bigpscal{\Pi_z\tau}{\D^k K^Q_{nm}(z-\cdot)}
\end{equation} 
where $\alpha = \abss{\tau}$. Then the model is formally given by  
\begin{equation}
\label{eq:def_modelIQ} 
 (\Pi_z\cI^Q\tau)(\bar z) 
 = \bigpscal{\Pi_z\tau}{K^Q(\bar z-\cdot)} 
 - (\Pi_z\cJ^Q(z)\tau)(\bar z)\;.
\end{equation} 
The precise formulation of this relation is that for any test function $\psi$, 
\begin{equation}
\label{eq:def2_model_IQ} 
 \bigpscal{\Pi_z\cI^Q\tau}{\psi} 
 = \sumnm \int_{\R^{d+1}} \bigpscal{\Pi_z\tau}{K^{Q;\alpha}_{nm;z\bar z}}
\psi(\bar z)\6\bar z\;, 
\end{equation} 
where
\begin{equation}
\label{eq:def_KQalphanm} 
 K^{Q;\alpha}_{nm;z\bar z}(z')
 = K^Q_{nm}(\bar z-z') - \sum_{\abss{k} < \alpha+\beta}
 \frac{(\bar z-z)^k}{k!} \D^kK^Q_{nm}(z-z')\;.
\end{equation}
We still need to verify that all these definitions make sense for the new
kernel. We can however exploit the fact that in practice, we will only need to
apply this construction to symbols $\tau$ \added[id=NB]{whose model does not 
depend on 
the reference time in the following sense.}
\added[id=NB]{%
\begin{definition}
We say that the model $\Pi\tau$ is base-time independent if 
\begin{equation}
 (\Pi_{z+h}\tau)(\bar z) = (\Pi_z\tau)(\bar z)
\end{equation} 
holds for all $z,\bar z\in\R^{d+1}$ and all time shifts $h=(h_0,0) \in 
\R\times\R^d$. 
\end{definition}
}%
\begin{lemma}
\label{lem:modelIQ} 
Assume that $\tau\in T_\alpha$ \added[id=NB]{has a base-time independent model} 
and that 
$\alpha+\beta\not\in\N$. Then the series 
in~\eqref{eq:def_JQ} and~\eqref{eq:def2_model_IQ} are absolutely convergent. 
Furthermore, 
\begin{equation}
\label{eq:modelIQ} 
 \bigabs{\bigpscal{\Pi_z\cI^Q\tau}{\psi^\lambda_z}} 
 \lesssim \lambda^{\alpha+\beta} \norm{\Pi}_{\alpha;\fraK_z}
\end{equation} 
holds uniformly over $z\in\R^{d+1}$ and $\lambda\in(0,1]$, where
$\psi^\lambda_z(\bar z) = \cS^\lambda_{\fraks,z}\psi(\bar z)$ and 
$\fraK_z$ is the ball of radius $2$ centred in~$z$.
Here $\cS^\lambda_{\fraks,z}\psi(\bar z_0,\dots,\bar z_d)
= \lambda^{-\abs{\fraks}}\psi(\lambda^{-\fraks_0}(\bar z_0-z_0), 
\dots, \lambda^{-\fraks_d}(\bar z_d-z_d))$.
\end{lemma}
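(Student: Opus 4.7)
The plan is to adapt the proof of the analogous estimate for the standard abstract integration operator $\cI$ in \cite[Sect.~5]{Hairer2014}, using the kernel decomposition of Proposition~\ref{prop:KQnm} in place of the usual dyadic decomposition of a single kernel. The novelty is that the index $m$ ranges over a set of size $\lesssim 2^{\fraks_0 n}$ at each scale, which is exactly compensated by the extra factor $\fraks_0$ in the bounds \eqref{eq:KQnm_2} and \eqref{eq:KQnm_3}. The base-time independence assumption plays a decisive role: it lets us identify $\Pi_z\tau$ with $\Pi_{z-h_{nm}}\tau$ as distributions, and thereby treat $K^Q_{nm}(z-\cdot)$, which is supported near $z - h_{nm}$, as a rescaled test function centred at a shifted base point of the model, without incurring any additional cost in the seminorm $\norm{\Pi}_{\alpha;\fraK_z}$.

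First I would establish absolute convergence of the series in \eqref{eq:def_JQ} and \eqref{eq:def2_model_IQ}. For each multiindex $k$ with $\abss{k}<\alpha+\beta$, I would view $\D^k K^Q_{nm}(z-\cdot)$ as $2^{-(\fraks_0+\beta-\abss{k})n}$ times a unit rescaled test function of scale $2^{-n}$ centred at $z-h_{nm}$, the scale factor being extracted from \eqref{eq:KQnm_2}. Using base-time independence to rewrite $\Pi_z\tau = \Pi_{z-h_{nm}}\tau$ and applying the analytic bound for the model at the new base point gives
\begin{equation*}
 \bigabs{\bigpscal{\Pi_z\tau}{\D^k K^Q_{nm}(z-\cdot)}}
 \lesssim 2^{-(\alpha+\beta+\fraks_0-\abss{k})n} \norm{\Pi}_{\alpha;\fraK_z}\;,
\end{equation*}
where I used base-time independence once more to replace $\fraK_{z-h_{nm}}$ by $\fraK_z$. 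Summing over the $\lesssim 2^{\fraks_0 n}$ admissible values of $m$ yields $2^{-(\alpha+\beta-\abss{k})n}$ per scale, geometrically summable over $n\geqs0$ since $\abss{k}<\alpha+\beta$.

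For the main estimate \eqref{eq:modelIQ}, I would split the sum in \eqref{eq:def2_model_IQ} at the scale $n_\lambda$ with $2^{-n_\lambda}\sim\lambda$. For the large-scale part ($2^{-n}\geqs\lambda$), the Taylor subtraction in \eqref{eq:def_KQalphanm} supplies cancellations: a standard integral-remainder estimate produces a factor $\abs{\bar z - z}_\fraks^{\alpha+\beta}$ in $\abs{K^{Q;\alpha}_{nm;z\bar z}(z')}$, which, after pairing with $\Pi_z\tau$, integrating against $\abs{\psi^\lambda_z}$ and summing over~$m$, contributes $\lambda^{\alpha+\beta}\norm{\Pi}_{\alpha;\fraK_z}$ per scale, with a geometrically convergent sum over $n\leqs n_\lambda$. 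For the small-scale part ($2^{-n}<\lambda$), the Taylor subtraction is dropped and I would rewrite the pairing as $\bigpscal{\Pi_z\tau}{K^Q_{nm}*\psi^\lambda_z}$; Taylor-expanding $\psi^\lambda_z$ about $z+h_{nm}$ up to order $\lceil\alpha+\beta\rceil$ and invoking \eqref{eq:KQnm_3} to bound the moments of $K^Q_{nm}$, the convolution is identified with a rescaled test function of scale $\lambda$ centred at $z+h_{nm}$, normalised by a factor $\lesssim 2^{-(\beta+\fraks_0)n}$. Base-time independence then produces $\lesssim 2^{-\beta n}\lambda^\alpha\norm{\Pi}_{\alpha;\fraK_z}$ per scale after the $m$-sum, and the $n$-sum over $n>n_\lambda$ gives the desired~$\lambda^{\alpha+\beta}$.

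The main obstacle is the bookkeeping in the small-scale regime: $K^Q_{nm}*\psi^\lambda_z$ is not literally a rescaled test function, and one must carefully track the mismatch between scales and base points when applying the analytic bound. The condition $\alpha+\beta\notin\N$ is used here to ensure that the Taylor order $\lceil\alpha+\beta\rceil$ does not coincide with the threshold at which the geometric sum over~$n$ becomes logarithmically divergent. All shifts $h_{nm}$ that appear in the intermediate estimates--potentially of size~$O(T)$--are harmless because $\norm{\Pi}_{\alpha;\fraK_{z\pm h_{nm}}} = \norm{\Pi}_{\alpha;\fraK_z}$ by base-time independence, so the estimate does not degrade with the global time-extent of~$Q$.
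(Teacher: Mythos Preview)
Your proposal is correct and follows essentially the same strategy as the paper's proof: split at the scale $2^{-n}\sim\lambda$, use the Taylor-remainder representation~\eqref{eq:KQalpha_int} of $K^{Q;\alpha}_{nm;z\bar z}$ together with the pointwise bound~\eqref{eq:bound_Pitau_DkKQ} for large scales, and for small scales treat the unsubtracted and subtracted pieces of~\eqref{eq:decomp_YZ} separately, with base-time independence providing the recentering throughout. Two minor imprecisions to watch in a full write-up: the support of $K^Q_{nm}(z-\cdot)$ sits near $z-h_{nm}$ (your signs are inconsistent between the two regimes), and in the small-scale regime you must still bound the subtracted Taylor terms $Z^\lambda_{nm;\ell}$---this follows directly from your estimate on $\bigpscal{\Pi_z\tau}{\D^\ell K^Q_{nm}(z-\cdot)}$ and does not require the moment condition~\eqref{eq:KQnm_3}.
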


The proof of this result is very similar to the proof
of~\cite[Lem.~5.19]{Hairer2014}, but there are a few differences due to the
nonlocal singularity of $K^Q$ which we explain in Appendix~\ref{app:model}. The
constant in~\eqref{eq:modelIQ} does not depend on $\norm{\Gamma}$ owing to the
fact that $\Pi$ is base-\added[id=NB]{time} independent. 

\added[id=NB]{%
\begin{remark}
\label{rem:base-time-indep} 
In our particular case, the canonical model of the following symbols is 
base-time independent: 
\begin{equation}
\label{eq:list_base_time_indep} 
\Xi, \RSI, \RSoI, \RSV, \RSVo, \RSVoo, \RSW, \RSWo, \RSWoo, \RSWooo, \unit\;.
\end{equation} 
Here $\RSoI := \cI^Q(\Xi)$ has homogeneity $\abss{\Xi}+2$, and we employ the 
usual notation and additivity rule of homogeneities for products. 
In fact, the canonical model is completely independent of the base point for 
these symbols, so that any translation, not only in the time direction, has no 
effect. Indeed, $(\Pi^\eps_z\Xi)(\bar z) = \xi^\eps(\bar z)$ does not depend on 
$z$, and neither does, for instance, 
\begin{equation}
 (\Pi^\eps_z\RSoI)(\bar z) = \int \xi^\eps(z') \KQ(\bar z-z')\6z'
\end{equation} 
owing to the fact that $\abss{\Xi}+2$ is strictly negative, so that the sum 
in~\eqref{eq:def_JQ} is empty. A similar argument holds for the other symbols 
in the list~\eqref{eq:list_base_time_indep}. 
In addition, the canonical model is base-time independent for monomials of the 
form $X_i$, $i\in\set{1,2,3}$, since $(\Pi^\eps_zX_i)(\bar z) = 
\bar z_i - z_i$, though it does depend on the spatial part of the base point. 
By contrast, the model of $X_0$ is not base-time independent, and neither are 
symbols such as $\RSoIW = \cI^Q(\RSWo)$, which have positive regularity (see 
also Remark~\ref{rem:Gamma_zz} below).
\end{remark}%
}%

In order to also extend the structure group, we first extend the coproduct via 
\begin{equation}
\label{eq:DeltaIQ} 
 \Delta(\cI^Q\tau) = (\cI^Q\otimes\Id) \Delta(\tau) 
 + \sum_{\abss{k+\ell} < \alpha+\beta} 
 \frac{X^k}{k!} \otimes \frac{X^\ell}{\ell!} \cJ^Q_{k+\ell}\tau
\end{equation} 
where the $\cJ^Q_{k+\ell}\tau$ are new symbols satisfying 
\begin{equation}
 \bigpscal{f_z}{\cJ^Q_\ell\tau} = 
 - \bigpscal{\Pi_z\tau}{\D^\ell K^Q(z-\cdot)}\;.
\end{equation} 
Recall that the $f_z$ are linear forms allowing to define the 
structure group by setting $\Gamma_{z\bar z} = F_z^{-1}F_{\bar z}$, where
\begin{equation}
 F_z \tau = (\Id\otimes f_z)\Delta\tau\;.
\end{equation} 
In the particular case $\tau=\Xi$, we obtain that $\cI^Q\tau =: \RSoI$ satisfies
$\Delta(\RSoI) = \RSoI \otimes \unit$ and thus 
\begin{equation}
 F_z \RSoI = \RSoI\;, \qquad 
 \Gamma_{z\bar z} \RSoI = \RSoI\;.
\end{equation} 
In what follows, it will be useful to have explicit expressions for the action
of the structure group on such monomials. Such an expression is provided by the
next result, proved in Appendix~\ref{app:Gammazz}.

\begin{lemma}
\label{lem:Gammazz} 
Assume that $\tau\in T_\alpha$ has a base-\added[id=NB]{time} independent model
$\Pi_z\tau$\deleted[id=NB]{$=\Pi\tau$} and satisfies 
$\Delta(\tau)=\tau\otimes\unit$. Then the
structure group acts via 
\begin{equation}
\Gamma_{z\bar z}\cI^Q\tau = \cI^Q\tau + 
\sum_{\abss{k}<\alpha+\beta}
\frac{X^k}{k!} 
\biggbrak{
 \chi_\tau^k(z) - \sum_{\abss{\ell} < \alpha+\beta-\abss{k}}
\frac{(z-\bar z)^\ell}{\ell!} \chi_\tau^{k+\ell}(\bar z)
}
\end{equation}
where $\chi_\tau^k(z) = \sumnm\pscal{\Pi\added[id=NB]{_z}\tau}{\D^k 
K^Q_{nm}(z-\cdot)}$.
\end{lemma}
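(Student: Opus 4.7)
The plan is to compute $\Gamma_{z\bar z}\cI^Q\tau = F_z^{-1}F_{\bar z}\cI^Q\tau$ directly from the coproduct formula~\eqref{eq:DeltaIQ}, by successively evaluating $F_z\cI^Q\tau$, inverting $F_z$, and then combining with the corresponding expression at $\bar z$.

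For the first step, I would apply $(\Id\otimes f_z)$ to~\eqref{eq:DeltaIQ}. The assumption $\Delta(\tau)=\tau\otimes\unit$ collapses the first summand $(\cI^Q\otimes\Id)\Delta\tau$ to $\cI^Q\tau\otimes\unit$, contributing just $\cI^Q\tau$ since $f_z(\unit)=1$. For the second summand I use multiplicativity of $f_z$ on $T_+$, the standard convention $f_z(X^\ell)=(-z)^\ell$, and the identity $f_z(\cJ^Q_m\tau) = -\chi_\tau^m(z)$. This last identity is obtained from the defining relation $\pscal{f_z}{\cJ^Q_m\tau} = -\pscal{\Pi_z\tau}{\D^m K^Q(z-\cdot)}$ combined with the kernel decomposition $K^Q=\sumnm K^Q_{nm}$, the exchange of sum and pairing being legitimate by the absolute convergence granted in the proof of Lemma~\ref{lem:modelIQ}. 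Collecting the double sum $\sum_{k,\ell}X^k(-z)^\ell/(k!\ell!)$ into a binomial then yields
\begin{equation*}
F_z(\cI^Q\tau) = \cI^Q\tau - \sum_{\abss{m}<\alpha+\beta}\chi_\tau^m(z)\frac{(X-z)^m}{m!}\;.
\end{equation*}

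Next, since $F_z$ is an algebra morphism with $F_zX_i=X_i-z_i$, its inverse satisfies $F_z^{-1}(X-z)^m = X^m$; applying $F_z^{-1}$ to the displayed identity gives $F_z^{-1}\cI^Q\tau = \cI^Q\tau + \sum_{\abss{m}<\alpha+\beta}\chi_\tau^m(z)X^m/m!$. The same reasoning at the base point $\bar z$ produces the analogous expression for $F_{\bar z}\cI^Q\tau$, and applying $F_z^{-1}$ to it (which sends $X-\bar z$ to $X+z-\bar z$) gives
\begin{equation*}
\Gamma_{z\bar z}\cI^Q\tau = \cI^Q\tau + \sum_{\abss{m}<\alpha+\beta}\chi_\tau^m(z)\frac{X^m}{m!} - \sum_{\abss{m}<\alpha+\beta}\chi_\tau^m(\bar z)\frac{(X+z-\bar z)^m}{m!}\;.
\end{equation*}
A binomial expansion of $(X+z-\bar z)^m$ with $m=k+\ell$, together with the additivity $\abss{k+\ell}=\abss{k}+\abss{\ell}$ under the scaling~$\fraks$, regroups the two sums by $k$ to yield exactly the displayed formula of the lemma.

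The main obstacle is essentially combinatorial bookkeeping: one must keep the index ranges consistent through the binomial reorganisations and commit to a sign convention for $f_z$ on polynomials. The only nontrivial analytic input, namely absolute convergence of the series defining $\chi_\tau^k(z)$ and the justification for exchanging sums with the distributional pairing, is already supplied by Lemma~\ref{lem:modelIQ} under the base-time independence hypothesis on $\Pi_z\tau$, so no new estimates are required for this lemma.
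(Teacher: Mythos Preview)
Your proof is correct and follows essentially the same route as the paper: compute $F_z\cI^Q\tau$ from the coproduct~\eqref{eq:DeltaIQ} using $\Delta\tau=\tau\otimes\unit$, $f_z(X^\ell)=(-z)^\ell$ and $f_z(\cJ^Q_m\tau)=-\chi_\tau^m(z)$, then form $\Gamma_{z\bar z}=F_z^{-1}F_{\bar z}$ and reorganise via the binomial identity. The only stylistic difference is that the paper carries out the last step by writing $F_z$ as an upper-triangular matrix in the basis $(\{X^k\},\cI^Q\tau)$ and multiplying $F_z^{-1}F_{\bar z}$ explicitly, whereas you work directly with the compact polynomial form $F_z\cI^Q\tau=\cI^Q\tau-\sum_m\chi_\tau^m(z)(X-z)^m/m!$ and the fact that $F_z^{-1}$ shifts $X$ by $+z$; this is exactly the alternative the paper records in the Remark following its proof, so the two arguments are equivalent.
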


\begin{remark}
\label{rem:Gamma_zz} 
This result illustrates the fact that~\cite[Lem.~4.8]{BK2016} is incorrect in
general. For instance, in the case $\tau=\RSW$ we obtain
\begin{equation}
 \Gamma_{z\bar z} \RSoIW = \RSoIW + \bigbrak{\chi^0_{\smallRSW}(z) -
\chi^0_{\smallRSW}(\bar z)}
\unit\;.
\label{eq:compute_Gamma2} 
\end{equation} 
Since $\chi^0_{\smallRSW}(z+h) - \chi^0_{\smallRSW}(\bar z+h) \neq
\chi^0_{\smallRSW}(z) - \chi^0_{\smallRSW}(\bar z)$, the operator
$\Gamma_{z\bar
z}$ is indeed not translation invariant. 
\added[id=NB]{An even simpler example of the incorrectness 
of~\cite[Lem.~4.8]{BK2016} 
occurs for the canonical model of $\Xi$, since $(\Pi^\eps_z\Xi)(\bar z) = 
\xi^\eps(\bar z)$ is constant with respect to $z$, not with respect to 
$\bar z$.}
\end{remark}


\subsection{Lifting the convolution operator}

Following the strategy in~\cite[Section~5]{Hairer2014}, it is natural to
look for a lift of the operation of convolution with $K^Q$ given for
$f\in\cD^{\gamma,\eta}$ by 
\begin{equation}
\label{eq:KQgamma_standard} 
 (\cK^Q_\gamma f)(z) = 
 \cI^Q f(z) + \cJ^Q(z) f(z) + (\cN^Q_\gamma f)(z)\;,
\end{equation} 
where the nonlocal operator $\cN^Q_\gamma$ is defined by 
\begin{equation}
\label{eq:NQ1} 
 (\cN^Q_\gamma f)(z) 
 = \sum_{\abss{k} < \gamma+\beta} \frac{X^k}{k!} \sumnm
 \bigpscal{\cR f - \Pi_zf(z)}{\D^k K^Q_{nm}(z-\cdot)}\;.
\end{equation} 
The problem with this definition is that in general, if $f$ is defined on a
sector of regularity $\alpha$, we can only prove a bound of the form 
\begin{equation}
\label{eq:weakbound} 
 \bigabs{\bigpscal{\cR f - \Pi_zf(z)}{\D^k K^Q_{nm}(z-\cdot)}}
 \lesssim 2^{(\abss{k}-\fraks_0-\alpha-\beta)n}\;,
\end{equation} 
instead of $2^{(\abss{k}-\gamma-\beta)n}$ as
in~\cite[Eq.~(5.42)]{Hairer2014}. The reason for this weaker bound is that in
general $K^Q_{nm}(z-\cdot)$ is not supported near the origin, so that
shifting the model as in the proof of~\cite[Lem.~5.18]{Hairer2014} produces an
additional factor of order $\norm{h_{nm}}_\fraks^{\gamma-\alpha}$, which can
have order $1$ instead of order $2^{-(\gamma-\alpha)n}$ as in that Lemma.

The bound~\eqref{eq:weakbound} proves convergence of the sum in~\eqref{eq:NQ1}
only for $\abss{k} < \alpha + \beta$. If for instance $f(z) = a(z)\RSV$, in
dimension $d=3$ the sector has regularity $\alpha=-1-2\kappa$, and thus only the
term with $k=0$ is well-defined. Restricting the sum over $k$ to only the term
$k=0$, however, results in $\cK^Q_\gamma f$ belonging only to some
$\cD^{\bar\gamma,\bar\eta}$ with $\bar\gamma < 1$, which is not sufficient to
carry out the fixed-point argument for a general cubic $F(U,V)$ 
for $d=3$.

A way out of this situation is to work with shift operators. 
Define, for any $h\in\R^{d+1}$, an operator $T_h:
\cC^\alpha_\fraks \to \cC^\alpha_\fraks$ by 
\begin{equation}
 \pscal{T_h\upsilon}{\psi} = \pscal{\upsilon}{\psi(\cdot-h)}
\end{equation} 
for any test function $\psi$. In case $\upsilon$ is a function, this amounts to
setting $T_h\upsilon(z) = \upsilon(z+h)$. 
\added[id=NB]{We define a shifted model $T_h\Pi =: \Pi^h$ by 
\begin{equation}
 \Pi^h_z\tau(\bar z) = \Pi_{z+h}\tau(\bar z+h)
 \qquad\forall z,\bar z \in\R^{d+1}
\end{equation} 
(which should be interpreted as $\Pi^h_z\tau = T_h(\Pi_{z+h}\tau)$ if 
$\Pi_z\tau$ is a distribution).}
Assume we can define, on some sector of $\cD^{\gamma,\eta}(\Pi)$, a
map $\cT_h$ taking values in $\cD^{\gamma,\eta}(\Pi^h)$ and satisfying
\begin{equation}
 \label{eq:cTh}
 \cR^h\cT_h = T_h\cR\;,
\end{equation} 
where $\cR^h$ is the reconstruction operator on $\cD^{\gamma,\eta}(\Pi^h)$. 
\added[id=NB]{If $f$ belongs to a function-like sector,~\eqref{eq:cTh} is 
equivalent to 
\begin{equation}
 (\cR^h\cT_h f)(z) = (\cR f)(z+h)
\end{equation}
where $(\cR f)(z+h) = (\Pi_{z+h}f(z+h))(z+h) = (\Pi^h_z f(z+h))(z)$.}

Setting $\cR^{nm} := \cR^{h_{nm}}$ we have 
\begin{equation}
 \bigpscal{\cR f}{\D^k K^Q_{nm}(z-\cdot)} 
 = \bigpscal{\cR^{nm} f_{nm}}{\D^k \KQhat_{nm}(z-\cdot)}
\end{equation} 
where $f_{nm} = \cT_{h_{nm}}f$ and 
\begin{equation}
\label{eq:KQhat} 
 \KQhat_{nm}(z) = K^Q_{nm}(z + h_{nm})
\end{equation} 
is a shifted kernel, supported in a ball of radius of order $2^{-n}$ around the
origin. Finally, let $\Pi^{nm} = \Pi^{h_{nm}} = T_{h_{nm}}\Pi$ denote the 
time-shifted models, and assume that for each $h_{nm}$, we can define an 
operator $\cK^Q_{\gamma,nm}$ from $\cD^{\gamma,\eta}(\Pi^{nm})$ to
$\cD^{\gamma+\beta,\bar\eta}(\Pi)$ satisfying 
\begin{equation}
\label{eq:conv_KQnm} 
 \cR\cK^Q_{\gamma,nm} = \KQhat_{nm} * \cR^{nm}\;.
\end{equation} 
Then the operator 
\begin{equation}
 \cK^Q_{\gamma} = \sumnm \cK^Q_{\gamma,nm}\cT_{h_{nm}} 
\end{equation} 
maps $\cD^{\gamma,\eta}(\Pi)$ into $\cD^{\gamma+\beta,\bar\eta}(\Pi)$
and satisfies the required identity $\cR\cK^Q_\gamma = K^Q*\cR$. 

The property~\eqref{eq:conv_KQnm} can be achieved by defining
$\cK^Q_{\gamma,nm}$ as in~\eqref{eq:KQgamma_standard}, but replacing the model,
kernel and reconstruction operator in~\eqref{eq:NQ1} and~\eqref{eq:def_JQ} by
their shifted versions. This has the advantage of improving the
bound~\eqref{eq:weakbound}, since the kernel $\KQhat_{nm}$ is now supported near
the origin. A drawback is that this forces us to introduce a countable infinity
of new symbols $\cI^Q_{nm}\tau$, for $\tau$ in the sector under consideration.
We will now show that in the case of FitzHugh--Nagumo-type SPDEs of the
form~\eqref{eq:fhn01}, one can indeed construct a shift map
realising~\eqref{eq:cTh} on a specific sector of negative homogeneity. Then we
will check that the introduction of infinitely many new symbols does not pose a
problem for the renormalisation procedure. 


\subsection{Multilevel Schauder estimates for FitzHugh--Nagumo-type SPDEs}

We now particularise to the FitzHugh--Nagumo-type SPDE~\eqref{eq:fhn01} in
dimension $d=3$. We consider modelled distributions in
$\cD^{\gamma,\eta}$ of the form 
\begin{align}
\nonumber
 f(z) &= \sum_{\tau\in\cF_1} c_\tau \tau 
 + \sum_{\tau\in\cF_2} a_\tau(z) \tau 
 + \varphi(z)\unit
 + \sum_{\tau\in\cF_3} a_\tau(z) \tau \\
 &=: f_1(z) + f_2(z) + \varphi(z)\unit + f_3(z)\;,
\label{eq:f123} 
\end{align} 
where
\begin{align}
\nonumber
\cF_1 &= \set{\RSW,\RSWo,\RSWoo,\RSWooo}\;, \\
\cF_2 &= \set{\RSI,\RSoI,\RSV,\RSVo,\RSVoo,
X_i\RSV,X_i\RSVo,X_i\RSVoo\colon i\in\set{1,2,3}}\;,
\label{eq:tauset_KQ} 
\end{align}
and $\cF_3$ is such that any $\tau\in\cF_3$ satisfies the diagonal identity
\begin{equation}
\label{eq:diag_ident} 
\lim_{\lambda\to0} \pscal{\Pi_z\tau}{\psi^\lambda_z} = 0\;.
\end{equation} 
The reason why we only include polynomial elements $X_i$ in the 
spatial directions in $\cF_2$ is that owing to the polynomial scaling, 
$\abs{X_0}_\fraks=2$ and thus $\abs{X_0{\protect\RSV}}_\fraks>0$. 
By linearity, we may define separately the action of $\cK^Q_\gamma$ on $f_1$,
$f_2$, $\varphi\unit$, and $f_3$. In the case of $f_1$ and $\varphi\unit$, we
use the standard definition~\eqref{eq:KQgamma_standard}, which takes here the
form
\begin{align}
\label{eq:def_KQf1} 
\cK^Q_\gamma f_1(z) &= \sum_{\tau\in\cF_1} 
c_\tau\bigbrak{\cI^Q\tau + \chi^0_\tau(z)\unit}\;, \\
\cK^Q_\gamma \varphi\unit(z) &= 
\sum_{\abss{k} < \gamma+\beta} \frac{X^k}{k!}
\pscal{ \varphi}{\D^k K^Q(z-\cdot)}
\;.
\label{eq:def_KQf1phi} 
\end{align}
Here we have set $\cN_\gamma f_1 = 0$, since we may choose 
$\cR f_1 = \Pi_z f_1(z) = \sum_{\tau\in\cF_1}c_\tau\Pi\tau$, owing to the fact
that $f_1$ does not depend on $z$.
Furthermore, we have used the fact that thanks 
to the vanishing-moments condition, $\cJ^Q(z)\unit=0$ and 
$\pscal{\Pi_z\unit}{\D^k K^Q(z-\cdot)}=0$.
For $f_3$ we simply set 
\begin{equation}
 \cK^Q_\gamma f_3(z) = 0\;,
\end{equation} 
which is allowed thanks to the diagonal identity~\eqref{eq:diag_ident}.

It thus remains to define $\cK^Q_\gamma f_2(z)$. Here we use the procedure
based on shift operators, as outlined above. Owing to the fact that 
the only polynomial terms $X_i$ occurring in $\cF_2$ are purely 
spatial, all $\tau\in\cF_2$ \added[id=NB]{are base-time independent 
(cf.~Remark~\ref{rem:base-time-indep})}. As a consequence, one can check that 
the map $\cT_{h_{nm}}$ can be realised by 
\begin{equation}
 \cT_{h_{nm}} f_2(z) = 
\sum_{\tau\in\cF_2} a_\tau(z+h_{nm})\tau\;.
\end{equation} 
In this way, we obtain 
\begin{align}
\nonumber
 \cK^Q_\gamma f_2(z) = \sumnm \biggl\{ &\sum_{\tau\in\cF_2} 
 a_\tau(z+h_{nm}) \biggbrak{\cI^Q_{nm}\tau 
 + \sum_{\abss{k} < \abss{\tau}+\beta} \frac{X^k}{k!} 
 \hat\chi^k_{\tau,nm}(z)} \\
 &{}+ \sum_{\abss{k} < \gamma+\beta} \frac{X^k}{k!} 
 b^k_{nm}(z) \biggr\}\;, 
 \label{eq:def_KQf2} 
\end{align}
where
\begin{align}
\nonumber
\hat\chi^k_{\tau,nm}(z) &=
\bigpscal{\Pi_z^{nm}\tau}{\D^k\KQhat_{nm}(z-\cdot)}\;,
\\
b^k_{nm}(z) &= \bigpscal{\cR^{nm}f_{nm} - \Pi^{nm}_z
f_{nm}(z)}{\D^k\KQhat_{nm}(z-\cdot)}\;.
 \label{eq:def_bknm} 
\end{align}
Furthermore, the $\cI^Q_{nm}\tau$ are new symbols with model 
\begin{equation}
\label{eq:model_IQnm} 
 (\Pi_z\cI^Q_{nm}\tau)(\bar z) 
 = \hat\chi^0_{\tau,nm}(\bar z) 
 - \sum_{\abss{k} < \abss{\tau}+\beta} \frac{(\bar z-z)^k}{k!}
\hat\chi^k_{\tau,nm}(z)\;.
\end{equation} 
Since $T_{\abss{\tau}+\beta}$ is now infinite-dimensional for $\tau\in\cF_2$, 
the choice of norm on these subspaces matters, and we choose it to be the 
supremum norm. \added[id=NB]{More precisely, writing $\alpha=\abss{\tau}+\beta$, 
we have
\begin{equation}
 g = \sumnm \sum_{\tau\in\cF_2} c_{\tau,nm} \cI^Q_{nm}\tau 
 \qquad \Rightarrow \qquad 
 \norm{g}_{\alpha} = \supnm \sup_{\tau\in\cF_2} \bigabs{c_{\tau,nm}}\;,
\end{equation}
where the supremum over $\tau\in\cF_2$ may be replaced by any other norm on the 
finite-dimensional span of $\cF_2$.}

\added[id=NB]{We summarise the construction in the following definition}.
\added[id=NB]{%
\begin{definition}
Let $\cF_1$ and $\cF_2$ be defined by~\eqref{eq:tauset_KQ} and let 
\begin{equation}
 \cF_3 = \bigsetsuch{\cI^Q\tau}{\tau\in\cF_1} \cup 
 \bigsetsuch{\cI^Q_{nm}\tau}{\tau\in\cF_2,(n,m)\in\setnm}\;.
\end{equation} 
Take as model space the Banach space 
\begin{equation}
 \vspan(\cF_1 \cup \cF_2 \cup \cF_3) \cup \Tbar\;,
\end{equation} 
where $\Tbar$ is the span of all polynomials. If $f\in\cD^{\gamma,\eta}$ is of 
the form~\eqref{eq:f123}, we set 
\begin{equation}
 \cK^Q_\gamma f(z)
 = \cK^Q_\gamma f_1(z) + \cK^Q_\gamma \varphi\unit(z) + \cK^Q_\gamma f_2(z)\;,
\end{equation} 
where $\cK^Q_\gamma f_1$ and $\cK^Q_\gamma \varphi\unit$ are defined 
in~\eqref{eq:def_KQf1} and~\eqref{eq:def_KQf1phi} and $\cK^Q_\gamma f_2$ is 
given in~\eqref{eq:def_KQf2}. 
\end{definition}
\begin{remark}
We could also have introduced symbols of the form $\cI^Q_{nm}\tau$ for 
$\tau\in\cF_1$, but this is not necessary because $f_1(z)$ does not depend on 
$z$. 
\end{remark}
}

In this setting, we can now state our central result, which is the following
extension of the multilevel Schauder estimates in~\cite[Thm.~5.12]{Hairer2014}.
Here the notations for $\normDgamma{f}_{\gamma,\eta;T}$, $\seminormff{f}{\bar
f}_{\gamma,\eta;T}$ and $\seminormff{Z}{\bar Z}_{\gamma;O}$ are as
in~\cite[Def.~6.2]{Hairer2014} and~\cite[Sec.~7.1]{Hairer2014} with $P$ the
hyperplane $\set{t=0}$, see also~\cite[Sec.~4.3]{BK2016}. 

\begin{theorem}
\label{thm:Schauder} 
Let $\alpha_0 = \abss{\RSW}$ be the regularity of the sector defining $f$. 
Assume $f\in\cD^{\gamma,\eta}$ is of the form~\eqref{eq:f123}, where $\eta < 
\alpha_0\wedge\gamma$, and $\gamma+\beta, \eta+\beta \not\in\N$. Then 
$\Rplus\cK^Q_\gamma\Rplus f \in \cD^{\gamma+\beta,\eta+\beta}$ and 
\begin{equation}
\label{eq:schauder1} 
 (\cR\cK^Q_\gamma \Rplus f)(z) = (K^Q * \cR \Rplus f)(z)
\end{equation}
holds for every $z=(t,x)$ such that $t>0$. 
Furthermore, we have 
\begin{equation}
\label{eq:schauder2} 
 \normDgamma{\Rplus\cK^Q_\gamma\Rplus f}_{\gamma+\beta,\bar\eta;T}
 \lesssim T^{\kappa/\fraks_0} \normDgamma{f}_{\gamma,\eta;T}
\end{equation} 
whenever $\bar\eta = \eta+\beta-\kappa$ with $\kappa>0$. 
Finally, if $\bar Z=(\bar\Pi,\bar\Gamma)$ is a second model satisfying 
$\bar\Pi_{z+h_{nm}}\tau = \bar\Pi_z\tau$ for all $\tau\in\cF_1\cup\cF_2$ and 
all $(n,m)\in\setnm$, and $\bar f\in\cD^{\gamma,\eta}(\bar\Gamma)$ is of the 
form~\eqref{eq:f123}, then 
\begin{equation}
\label{eq:schauder3} 
 \seminormff{\Rplus\cK^Q_\gamma\Rplus f}{\Rplus\bar\cK^Q_\gamma\Rplus
\bar f}_{\gamma+\beta,\bar\eta;T}
 \lesssim T^{\kappa/\fraks_0}
\bigpar{\seminormff{f}{\bar f}_{\gamma,\eta;T} + \seminormff{Z}{\bar
Z}_{\gamma;O}}\;.
\end{equation}
\end{theorem}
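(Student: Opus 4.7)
The plan is to exploit linearity and the decomposition $f = f_1 + f_2 + \varphi\unit + f_3$ in \eqref{eq:f123}, treating each summand separately. The contribution of $f_3$ is trivial: since every $\tau \in \cF_3$ satisfies the diagonal identity \eqref{eq:diag_ident}, one has $\cR f_3 = 0$ and setting $\cK^Q_\gamma f_3 = 0$ is consistent with \eqref{eq:schauder1}. For $f_1$, the coefficients $c_\tau$ do not depend on $z$, so no nonlocal correction term is needed and the definition \eqref{eq:def_KQf1} combined with Lemma~\ref{lem:modelIQ} and the bound \eqref{eq:modelIQ} yields directly the regularity $\gamma+\beta$ and reconstruction identity, after summation over $(n,m)\in\setnm$ of the individual kernel pieces. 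For $\varphi\unit$, formula \eqref{eq:def_KQf1phi} reduces the analysis to a classical convolution estimate with a smooth function, again controlled by Proposition~\ref{prop:KQnm}.

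The main work is the contribution of $f_2$. Here the plan is to mimic the proof of Hairer's Theorem~5.12 but using the shifted objects $\KQhat_{nm}$, $\Pi^{nm}$, $\cR^{nm}$ in place of their original counterparts. Since all $\tau\in\cF_2$ have base-time independent canonical models (Remark~\ref{rem:base-time-indep}), the shift map $\cT_{h_{nm}} f_2(z) = \sum_{\tau\in\cF_2} a_\tau(z+h_{nm})\tau$ indeed lands in $\cD^{\gamma,\eta}(\Pi^{nm})$ and satisfies \eqref{eq:cTh}. For each fixed $(n,m)$, the shifted kernel $\KQhat_{nm}$ is supported in a ball of radius of order $2^{-n}$ centred at the origin and, by Proposition~\ref{prop:KQnm}, satisfies the analogue of Hairer's Assumption~5.1 after extracting the factor $2^{-\fraks_0 n}$ from \eqref{eq:KQnm_2}. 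Applied to $\cT_{h_{nm}} f_2$, this yields Hairer-style bounds on $\cK^Q_{\gamma,nm}\cT_{h_{nm}} f_2$ in $\cD^{\gamma+\beta,\bar\eta}(\Pi)$, uniformly in $(n,m)\in\setnm$ up to the factor $2^{-\fraks_0 n}$. Summing over $m$ (which has at most of order $2^{\fraks_0 n}$ terms by definition of $\setnm$) absorbs this factor, and summing the resulting bounds over $n\geqs 0$ converges geometrically by standard manipulations, yielding \eqref{eq:schauder2} with a residual $T^{\kappa/\fraks_0}$ coming from the truncation by $\Rplus$ on the singular hyperplane $\set{t=0}$, exactly as in the proof of~\cite[Thm.~7.1]{Hairer2014}. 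The reconstruction identity \eqref{eq:schauder1} then follows by summing the local identities $\cR\cK^Q_{\gamma,nm}\cT_{h_{nm}} = \KQhat_{nm}*\cR^{nm}(\cT_{h_{nm}}\,\cdot) = K^Q_{nm}*\cR$ over $(n,m)$ and invoking the decomposition of $K^Q$ from Proposition~\ref{prop:KQnm}. Finally, the two-model bound \eqref{eq:schauder3} is obtained by bilinearity, writing differences as telescoping sums and applying the one-model estimate to each piece, using the assumption $\bar\Pi_{z+h_{nm}}\tau = \bar\Pi_z\tau$ to ensure the shift construction still makes sense for $\bar Z$.

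The main obstacle will be the summability of the series indexed by $\setnm$ in the $\cD^{\gamma+\beta,\bar\eta}$-norm, and in particular the fact that the new symbols $\cI^Q_{nm}\tau$, whose models are given by \eqref{eq:model_IQnm}, produce potentially unbounded contributions to the $\Gamma$-translation bounds. Controlling these requires using Lemma~\ref{lem:Gammazz} together with the base-time independence of the elements of $\cF_2$ to show that the action of $\Gamma_{z\bar z}$ on $\cI^Q_{nm}\tau$ produces only terms already present in the structure, with coefficients whose supremum over $(n,m)$ is bounded by a multiple of the original $\cD^{\gamma,\eta}$-norm of $f_2$, independently of the cardinality of $\setnm$. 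This is what motivates our choice of supremum norm on $T_{\abss{\tau}+\beta}$ and the restriction of $\cF_2$ to symbols carrying only purely spatial polynomial factors, whose reconstruction is genuinely translation invariant in the time direction.
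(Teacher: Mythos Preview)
Your overall strategy matches the paper's: split $f$ as in~\eqref{eq:f123}, dispose of $f_1$, $\varphi\unit$, $f_3$ by existing results or the diagonal identity, and concentrate on $f_2$ via the shifted kernels $\KQhat_{nm}$ and base-time independence. However, the summability argument in your middle paragraph is where the actual work lies, and as written it contains a gap.

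The assertion that each $\cK^Q_{\gamma,nm}\cT_{h_{nm}} f_2$ obeys a $\cD^{\gamma+\beta,\bar\eta}$ bound ``uniformly in $(n,m)$ up to the factor $2^{-\fraks_0 n}$'', so that summing over $m$ cancels this factor and summing over $n$ then ``converges geometrically by standard manipulations'', is not correct. For the polynomial components (where a genuine sum over $(n,m)$ occurs), cancelling the $2^{-\fraks_0 n}$ against the $m$-sum leaves a sum over $n$ that is \emph{not} geometrically convergent without further input. The paper's proof must split into three regimes according to the relative sizes of $2^{-n}$, $\norm{z-\bar z}_\fraks$, and $(1\wedge t_+\wedge\bar t_+)^{1/\fraks_0}$. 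In the small-$n$ regime $2^{-n} > \tfrac12(1\wedge t_+\wedge\bar t_+)^{1/\fraks_0}$, the separate bounds~\eqref{eq:bound_bk} on $b^{k,+}_{nm}$ and~\eqref{eq:bounds_apsibnm} on $a^+(z+h_{nm})\hat\chi^k_{\tau,nm}$ are individually too weak: the factor $(1\wedge t_+)^{(\eta-\gamma)/\fraks_0}$ diverges faster than the kernel bound decays. One must \emph{recombine} these terms into $\bigpscal{\cR^{nm}f_{nm}^+}{\D^k\KQhat_{nm}(z-\cdot)}$ and then invoke~\cite[Prop.~6.9]{Hairer2014} (giving $\cR^{nm}f_{nm}^+\in\cC^\eta_\fraks$) to obtain a summable estimate. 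An analogous recombination, using the Taylor representation~\eqref{eq:KQalpha_int} and writing the two-point difference as $Q^k_{nm}(z,\bar z)=\bigpscal{\cR^{nm}f_{nm}^+-\Pi^{nm}_{\bar z}f_{nm}^+(\bar z)}{\hat K^{Q,k;\gamma}_{nm;\bar zz}}$, is needed for the translation bounds. This recombination step is the heart of the argument and cannot be subsumed under ``standard manipulations''.

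A second point your sketch understates is the role of the \emph{outer} $\Rplus$. The time shifts $h_{nm}$ move the singularity of $a^+(\cdot+h_{nm})$ from $t=0$ to $t=-m2^{-\fraks_0 n}$; it is the cutoff $\indexfct{t>0}$ coming from the outer $\Rplus$ (not the inner one, and not merely the mechanism of~\cite[Thm.~7.1]{Hairer2014}) that prevents these infinitely many shifted singularities from contaminating the supremum defining $\norm{\Rplus\cK^Q_\gamma\Rplus f_2(z)}_{\alpha+\beta}$.
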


The proof is given in Appendix~\ref{app:Schauder}. Note that we have assumed
$\eta<\alpha_0$ to simplify the notation (otherwise we need to 
take $\bar\eta = (\eta\wedge\alpha_0)+\beta-\kappa$). 
Note also the extra factor $\Rplus(t,x) = \indexfct{t>0}$, which is needed 
because the translation operators shift singularities along the time axis. 


\section{Fixed point argument}
\label{sec:fixedpoint} 

Assume the nonlinearity has the general cubic form~\eqref{eq:FUV}.
Note in particular that if $p(z)$ and $q(z)$ are polynomial terms, and 
$\Phi(z)$ and $\Psi(z)$ are terms of fractional, strictly positive 
homogeneity, then 
\begin{equation}
 F(\RSI + p(z) + \Phi(z), \RSoI + q(z) + \Psi(z)) 
 = f_1(z) + f_2(z) + F(p(z), q(z)) + f_3(z)\;,
\label{eq:FUV1} 
\end{equation} 
where 
\begin{align}
\nonumber
f_1(z) &= \gamma_1 \RSW + \gamma_2\RSWo + \gamma_3\RSWoo + \gamma_4\RSWooo\;,
\\
f_2(z) &= b_1(z) \RSV + b_2(z) \RSVo + b_3(z) \RSVoo 
+ a_1(z) \RSI + a_2(z) \RSoI\;, 
\end{align}
with 
\begin{align}
\nonumber
b_1(z) &= \beta_1 + 3\gamma_1p(z) +  \gamma_2q(z)\;, \\
\nonumber
b_2(z) &= \beta_2 + 2\gamma_2p(z) + 2\gamma_3q(z)\;, \\
\nonumber
b_3(z) &= \beta_3 +  \gamma_3p(z) + 3\gamma_4q(z)\;. \\
\nonumber
a_1(z) &= \alpha_1 + 2\beta_1p(z) + \beta_2q(z) +
3\gamma_1p(z)^2 +2\gamma_2p(z)q(z) + \gamma_3q(z)^2\;, \\
a_2(z) &= \alpha_2 + \beta_2p(z) + 2\beta_3q(z) +
\gamma_2p(z)^2 +2\gamma_3p(z)q(z) + 3\gamma_4q(z)^2\;.
\label{eq:bc} 
\end{align}
Furthermore, all terms of $f_3(z)$ contain at least a factor $\Phi(z)$ or a 
factor $\Psi(z)$. Thus if the model $\Pi$ satisfies the two properties 
\begin{equation}
\label{eq:model_canonical} 
 \abs{\Pi_z\tau(\bar z)} \lesssim \norm{z-\bar 
z}_\fraks^{\abss{\tau}}\norm{\tau}\;, 
\qquad
\Pi_z(\tau_1\tau_2) = \Pi_z(\tau_1) \Pi_z(\tau_2)
\end{equation} 
for all $\tau,\tau_1,\tau_2\in T$, then $f_3$ satisfies the diagonal 
identity~\eqref{eq:diag_ident}. 

Let $\cD^{\gamma,\eta}_*(\Pi)$ denote the subspace of modelled 
distributions in $\cD^{\gamma,\eta}(\Pi)$ whose components of negative 
homogeneity are of the form $c_1\RSI+c_2\RSoI$ for constants $c_1,c_2\in\R$. 
Consider the map $\cM(U,V) = (\cM_1(U,V),\cM_2(U,V))$ defined on 
$\cD^{\gamma,\eta}_*(\Pi)\times\cD^{\gamma,\eta}_*(\Pi)$ by 
\begin{align}
\nonumber
\cM_1(U,V) &= \Rplus(\cK_{\bar\gamma}+R_\gamma\cR)\Rplus F(U,V) +
W_1\;, \\
\cM_2(U,V) &= \Rplus(\cK^Q_{\bar\gamma}+R^Q_\gamma\cR)\Rplus
F(U,V) + W_2\;,
\label{eq:fix_12} 
\end{align}
where $W_1$ and $W_2$ are placeholders for the stochastic convolution and the
initial conditions (we only need the case where $W_1 - \RSI$ and $W_2 -
\RSoI$ take values in the polynomial part of the regularity structure).
By iterating the map~\eqref{eq:fix_12}, we find that if it admits a fixed point,
then it necessarily has the form 
\begin{alignat}{2}
\nonumber
U(z) ={}& \RSI &&+ \varphi(z) \unit 
+ \bigbrak{\gamma_1\!\RSIW + \gamma_2\!\RSIWo + \gamma_3\!\RSIWoo +
\gamma_4\!\RSIWooo}
+ \bigbrak{b_1(z)\RSY + b_2(z)\RSYo + b_3(z)\RSYoo} + \dots \\
\label{eq:UV_expansion} 
V(z) ={}& \RSoI &&+ \psi(z) \unit 
+ \bigbrak{\gamma_1\!\RSoIW + \gamma_2\!\RSoIWo + \gamma_3\!\RSoIWoo +
\gamma_4\!\RSoIWooo} \\
\nonumber
&&&{}+ \sumnm \bigbrak{b_1(z+h_{nm})\RSoY_{nm} + b_2(z+h_{nm})\RSoYo_{nm} +
b_3(z+h_{nm})\RSoYoo_{nm}} + \dots
\end{alignat}
where the $b_i(z)$ are as in~\eqref{eq:bc} with $p(z)=\varphi(z)$,
$q(z)=\psi(z)$, and the dots indicate terms of homogeneity at least $1$. 
As in~\cite[Prop.~5.2]{BK2016}, it is rather straightforward to show that if
$(U,V)$ satisfies the fixed-point equation~\eqref{eq:fix01} with $U$ and $V$ in
some $\cD^{\gamma,\eta}$ then $(u,v)=(\cR U,\cR V)$ satisfies~\eqref{eq:fhn02}. 

\cite[Prop.~5.6]{BK2016} is then replaced by the following result, which is all 
we need for the fixed-point argument to work. Its proof is very similar to the 
proof of~\cite[Prop.~5.6]{BK2016}, so we omit it here. 

\begin{prop}
Let $\Pi$ be a model satisfying~\eqref{eq:model_canonical}, and assume $-\frac23 
< \eta < \alpha < -\frac12$, $\eta+2\alpha>-2$ and $\gamma>-2\alpha$. Then for 
any $W_1,W_2\in\cD^{\gamma,\eta}_*(\Pi)$ of regularity $\alpha$, there exists a 
time $T>0$ such that $\cM$ admits a unique fixed point $(U^*,V^*) \in 
\cD^{\gamma,\eta}_*(\Pi)\times\cD^{\gamma,\eta}_*(\Pi)$ on $(0,T)$. Furthermore, 
the solution map $\cS_T: (W_1,W_2,Z) \mapsto (U^*,V^*)$ is jointly Lipschitz 
continuous.  
\end{prop}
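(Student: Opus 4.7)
The plan is to apply the Banach fixed-point theorem, following the template of \cite[Prop.~5.6]{BK2016} but using the new Schauder estimate from Theorem~\ref{thm:Schauder} to handle the $V$-component. Concretely, for each $R>0$ I would consider the closed ball $\cB_R$ of radius $R$ in $\cD^{\gamma,\eta}_*(\Pi)\times\cD^{\gamma,\eta}_*(\Pi)$ around $(W_1,W_2)$, and show that for $T$ small enough (depending on $R$ and on the relevant model norms) the map $\cM$ is a contraction on $\cB_R$. The regularity of both components of any $(U,V)\in\cB_R$ is $\alpha$, since the only elements of negative homogeneity are the fixed multiples of $\RSI$ and $\RSoI$ contained in $W_1,W_2$.

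The central analytic input is Hairer's multiplication estimate \cite[Prop.~6.12]{Hairer2014}: combined with the algebraic identity $2\eta+\alpha \geqs 3\eta\wedge(\eta+2\alpha)$, it yields $F(U,V) \in \cD^{\gamma+2\alpha,\,3\eta\wedge(\eta+2\alpha)}$. The hypothesis $\gamma>-2\alpha$ gives $\gamma+2\alpha>0$, which is exactly what is needed so that $\cR\Rplus F(U,V)$ is well defined and that \cite[Thm.~7.1]{Hairer2014} applies to $\cM_1$. The hypothesis $\eta+2\alpha>-2$, together with $\eta>-\tfrac{2}{3}$, ensures $3\eta\wedge(\eta+2\alpha)>-2$, which is the regularity threshold past which the integration maps behave well near the initial hyperplane $\set{t=0}$. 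Applying the standard Schauder estimate to $\cM_1$ and our Theorem~\ref{thm:Schauder} to $\cM_2$, the image lies in $\cD^{\gamma+2\alpha+2,\,3\eta\wedge(\eta+2\alpha)+2-\kappa}$, which, since $\alpha>-1$ and $\eta>-1$, embeds continuously into $\cD^{\gamma,\eta}$. The negative-homogeneity part of this image vanishes because $\gamma+2\alpha>0$, so the image remains in the starred subspace, i.e.\ $\cM$ genuinely maps $\cD^{\gamma,\eta}_*\times\cD^{\gamma,\eta}_*$ into itself.

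The self-mapping and contraction properties both exploit the factor $T^{\kappa/\fraks_0}$ present in~\eqref{eq:schauder2} and~\eqref{eq:schauder3} (and in the corresponding standard estimate for $\cK_{\bar\gamma}+R_\gamma\cR$). For the self-map, one bounds $\normDgamma{\cM_i(U,V)-W_i}_{\gamma,\eta;T}$ by $C T^{\kappa/\fraks_0}(R+R^3)$ and chooses $T$ small; for the contraction, one writes $F(U,V)-F(\bar U,\bar V)$ as a sum of telescoping products and estimates each factor using the same multiplication bound plus local Lipschitz continuity of $F$ on $\cB_R$, giving a bound $CT^{\kappa/\fraks_0}(1+R^2)\normDgamma{(U,V)-(\bar U,\bar V)}_{\gamma,\eta;T}$ which becomes a strict contraction for $T$ small. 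The Banach fixed-point theorem then delivers a unique $(U^*,V^*)$, and joint Lipschitz continuity of $\cS_T$ in $(W_1,W_2,Z)$ is a routine consequence of the same estimates together with~\eqref{eq:schauder3}. The main technical obstacle here is bookkeeping: one must verify that when applying the multiplication bound to the cross terms in $F(U,V)$, the extra positive-homogeneity pieces in $U$ and $V$ (the tree expansion~\eqref{eq:UV_expansion}, including the $\cI^Q_{nm}$-terms with their $\supnm$ norm on the infinite-dimensional subspace) do not deteriorate the regularity exponents, so that $F(U,V)$ really does land in a space on which both Schauder estimates apply.
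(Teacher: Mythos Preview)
Your proposal is correct and matches the paper's approach: the paper explicitly omits the proof, stating only that it is very similar to \cite[Prop.~5.6]{BK2016}, and your outline is precisely that argument with Theorem~\ref{thm:Schauder} substituted for the $V$-component. The one point worth making more explicit is that applying Theorem~\ref{thm:Schauder} requires $F(U,V)$ to have the specific structure~\eqref{eq:f123}, which is exactly what the computation~\eqref{eq:FUV1} establishes for $(U,V)\in\cD^{\gamma,\eta}_*\times\cD^{\gamma,\eta}_*$, with the diagonal identity for $f_3$ following from the assumption~\eqref{eq:model_canonical} on the model.
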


Note that in the case  
\begin{align}
\nonumber
W_1 &= (\cK_{\bar\gamma}+R_\gamma\cR)\Rplus \Xi + Gu_0\;, \\
W_2 &= (\cK^Q_{\bar\gamma}+R_\gamma\cR)\Rplus \Xi + G^Q u_0 + \widehat Q v_0\;,
\end{align}
the fixed point $(U^*,V^*)$ of $\cM$ is indeed a fixed point 
of~\eqref{eq:fix01}. As pointed out in~\cite[Rem.~5.7]{BK2016}, the assumptions 
on $u_0$ and $v_0$ guarantee that $W_1$ and $W_2$ belong to the right 
functional space.


\section{Renormalisation}
\label{sec:renormalisation} 

It remains to check that the fact that we have modified the regularity 
structure by adding a countable infinity of symbols does not cause any problems 
as far as the renormalisation procedure is concerned, and to derive the 
renormalised equations. 

We define a renormalisation transformation, depending on two parameters, given 
by 
\begin{equation}
 M_\eps\tau = \exp \Bigset{-C_1(\eps) L_1\tau - C_2(\eps) L_2\tau}\;, 
\end{equation} 
where the generators $L_1$ and $L_2$ are defined by applying the substitution 
rules (called contractions)
\begin{equation}
 L_1 : \RSV \mapsto \unit\;, 
 \qquad 
 L_2 : \RSWV \mapsto \unit
\end{equation} 
as many times as possible, so that for instance $L_1 \RSW = 3 \RSI$. In 
particular, we obtain 
\begin{equation}
 M_\eps \RSoWV_{nm} = \RSoWV_{nm} - C_1(\eps) \RSoY_{nm}\;.
\end{equation} 
Other examples of the action of $M_\eps$ are given in~\cite[(6.12)]{BK2016}. 
Note that there are no generators acting by contracting symbols that 
contain at least one edge associated to $\cI^Q$, implying that for instance 
$M_\eps\RSVo = \RSVo$ and $M_\eps\RSoWoVo_{nm} = \RSoWoVo_{nm}$. 
The fact that these symbols do not require additional 
renormalisation constants is a consequence of~\cite[Lem.~6.2]{BK2016} and 
Lemma~\ref{lem:I00} below.

The renormalisation map $M_\eps$ induces a renormalised model 
$\widehat\Pi^\eps=\Pi^{M_\eps}$ which can be computed as described 
in~\cite[Sect.~8.3]{Hairer2014} 
and~\cite[Sect.~6.1]{BK2016}. In particular, we find 
\begin{equation}
 \widehat\Pi^\eps_z(\RSoWV_{nm}) 
 = \Pi^\eps_z(\RSoY_{nm}) \widehat\Pi^\eps_z(\RSV)\;. 
\end{equation} 
Here the canonical model for $\Pi^\eps_z(\RSoY_{nm})$ can be computed 
using~\eqref{eq:model_IQnm}, which yields  
\begin{align}
\nonumber
(\Pi^\eps_z \RSoY_{nm})(\bar z) 
&= \hat\chi^0_{\smallRSV,nm}(\bar z) - \hat\chi^0_{\smallRSV,nm}(z) \\
\nonumber
&= \pscal{\Pi^{\eps,nm}_z\RSV}{\KQhat_{nm}(\bar z-\cdot) - 
\KQhat_{nm}(z-\cdot)} \\
\nonumber
&= \pscal{\Pi^{\eps}_z\RSV}{K^Q_{nm}(\bar z-\cdot) - 
K^Q_{nm}(z-\cdot)} \\
&= \int \bigbrak{K^Q_{nm}(\bar z-z_1) - K^Q_{nm}(z-z_1)} 
\bigbrak{(K_\eps * \xi)(z_1)}^2 \6z_1\;,
\end{align}
where $K_\eps = K * \varrho_\eps$, and we have used the 
expression for the canonical model of $\RSV$ in the last line, 
which is base-\added[id=NB]{time} independent, cf.~\cite[(6.28)]{BK2016}. It 
follows that 
\begin{align}
\nonumber
  \widehat\Pi^\eps_z(\RSoWV_{nm})(\bar z)
  = \int & \bigbrak{K^Q_{nm}(\bar z-z_1) - K^Q_{nm}(z-z_1)} 
\bigbrak{(K_\eps * \xi)(z_1)}^2 \6z_1 \\
& \times \Bigpar{\bigbrak{(K_\eps * \xi)(\bar z)}^2 - C_1(\eps)}\;.
\end{align} 
The renormalised models of other symbols are obtained in a similar way, using 
the expressions given in~\cite[(6.13)]{BK2016}. 

We now have to show that the renormalised models converge, for an appropriate 
choice of the renormalisation constants $C_1(\eps)$ and $C_2(\eps)$, to a 
well-defined limiting model. This amounts to showing that the Wiener chaos 
expansions of the renormalised models satisfy the bounds~\cite[(6.20)]{BK2016}. 
To a large extent, the computations have already been made 
in~\cite[Prop.~6.4]{BK2016}, so that we only discuss one representative case 
involving an infinite collection of symbols. Proceeding as 
in~\cite[(6.39)]{BK2016}, we find that the contribution to the zeroth Wiener 
chaos of $\widehat\Pi^\eps_z(\RSoWV_{nm})$ is given by 
\begin{align}
 (\widehat\cW^{(\eps;0)}_z \RSoWV_{nm})(\bar z)
 = 2\iiint &\bigbrak{K^Q_{nm}(\bar z-z_1) - K^Q_{nm}(z-z_1)} \\
 &\times K_\eps(z_1-z_2) K_\eps(z_1-z_3) K_\eps(\bar z-z_2) K_\eps(\bar z-z_3) 
 \6z_1 \6z_2 \6z_3\;.
 \nonumber
\end{align} 
As in~\cite[Prop.~6.4]{BK2016}, the crucial term is the one involving 
$K^Q_{nm}(\bar z-z_1)$, which can be rewritten as $2I^Q_{00;nm}(\eps)$, 
where  
\begin{equation}
 I^Q_{00;nm}(\eps) = \int K^Q_{nm}(z_1) \cQ^\eps_0(z_1)^2 \6z_1\;, 
 \qquad 
 \cQ^\eps_0(z) = \int K_\eps(z_1) K_\eps(z_1-z)\6z_1\;.
\end{equation} 
Note that if $K^Q_{nm}$ is replaced by $K$, we obtain the 
renormalisation constant $C_2(\eps)$, which diverges like $\log(\eps^{-1})$, 
cf.~\eqref{eq:C1_C2}. The following lemma implies that no renormalisation is 
needed in the case of $\RobustRSoWVnm$. 

\begin{lemma}
\label{lem:I00} 
For all $(n,m) \in \setnm$, the bound
\begin{equation}
\label{eq:IQ00_nm} 
 \bigabs{I^Q_{00;nm}(\eps)} 
 \lesssim \frac{2^{-2n}}{m+2}
\end{equation}
holds uniformly in $\eps\in[0,1]$. 
\end{lemma}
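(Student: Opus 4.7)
The plan is to combine the pointwise estimates on $K^Q_{nm}$ from Proposition~\ref{prop:KQnm} with a uniform bound on the autocorrelation $\cQ^\eps_0$. First, I would establish
\[
\bigabs{\cQ^\eps_0(z)} \lesssim \frac{1}{\norm{z}_\fraks + \eps}
\]
uniformly in $\eps\in[0,1]$. Since the heat kernel in dimension $d=3$ satisfies $\abs{K(z)} \lesssim \norm{z}_\fraks^{-3}$ in the parabolic scaling (so that $\abs{\fraks}=5$), the autocorrelation has degree of homogeneity $-3-3+\abs{\fraks}=-1$ away from the origin, while $\cQ^\eps_0(0) \sim \eps^{-1}$ in accordance with the divergence of $C_1(\eps)$. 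A standard convolution estimate then interpolates between the two regimes.

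With this bound in hand, I would estimate $I^Q_{00;nm}(\eps)$ directly from the localisation of $K^Q_{nm}$. By~\eqref{eq:KQnm_2} at $k=0$ we have $\norm{K^Q_{nm}}_\infty \lesssim 2^{(\abs{\fraks}-\fraks_0-\beta)n} = 2^n$, and by~\eqref{eq:KQnm_1} the kernel is supported in a parabolic ball $B_{nm}$ of radius $O(2^{-n})$ centred at $h_{nm}=(m2^{-2n},0)$, whose parabolic norm satisfies $\norm{h_{nm}}_\fraks = \abs{m}^{1/2}\, 2^{-n}$. I would then distinguish two regimes according to the size of $m$.

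In the regime $m \geqs C$ for a sufficiently large constant $C$, one has $\norm{z_1}_\fraks \sim m^{1/2}\, 2^{-n}$ on $B_{nm}$, which in particular exceeds $\eps$, so Step~1 gives $\cQ^\eps_0(z_1)^2 \lesssim m^{-1}\, 2^{2n}$. Together with $\abs{B_{nm}} \sim 2^{-5n}$, this yields
\[
\bigabs{I^Q_{00;nm}(\eps)} \lesssim 2^n \cdot m^{-1}\, 2^{2n} \cdot 2^{-5n}
= m^{-1}\, 2^{-2n},
\]
matching the claim for large $m$. In the complementary regime $-1\leqs m \leqs C$, I would split $B_{nm}$ according to whether $\norm{z_1}_\fraks$ is larger or smaller than $\eps$. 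On $\set{\norm{z_1}_\fraks > \eps}$, using $\cQ^\eps_0(z_1)^2 \lesssim \norm{z_1}_\fraks^{-2}$ together with the parabolic volume identity $\int_{\norm{z}_\fraks \leqs 2^{-n}} \norm{z}_\fraks^{-2}\,\6z \sim 2^{-3n}$ gives a contribution $\lesssim 2^n \cdot 2^{-3n} = 2^{-2n}$. On $\set{\norm{z_1}_\fraks \leqs \eps}$, I would use $\cQ^\eps_0(z_1)^2 \lesssim \eps^{-2}$ and bound the volume by $\min(\eps^{\abs{\fraks}}, 2^{-n\abs{\fraks}})$; both subcases yield $\lesssim 2^{-2n}$ uniformly in $\eps$.

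Combining the two regimes produces the stated bound~\eqref{eq:IQ00_nm}, with the factor $(m+2)^{-1}$ absorbing the constant $C$ at the boundary between the regimes. The main subtlety I expect to encounter is the uniform-in-$\eps$ estimate in Step~1 at scales comparable to $\eps$; once it is in place, the rest of the argument is a direct computation that exploits the localisation of $K^Q_{nm}$ near $h_{nm}$.
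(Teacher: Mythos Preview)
Your proposal is correct and follows essentially the same route as the paper: both combine the support and size bounds on $K^Q_{nm}$ from Proposition~\ref{prop:KQnm} with the estimate $\cQ^\eps_0(z)^2 \lesssim (\abs{t}+\norm{x}^2+\eps^2)^{-1}$ (which the paper quotes from \cite[Lem.~6.2]{BK2016} and is equivalent to your $(\norm{z}_\fraks+\eps)^{-1}$ bound), then split into large and small $m$. The only cosmetic difference is that for small $m$ the paper works in time--space coordinates and simply bounds the denominator below by $r^2$ (the polar Jacobian cancels the singularity), avoiding your extra split on $\norm{z_1}_\fraks$ versus $\eps$.
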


The proof is given in Appendix~\ref{app:renorm1}. The important point is that 
the bound~\eqref{eq:IQ00_nm} is square-summable over all $(n,m)\in\setnm$, which 
is related to the fact that $K^Q(z_1) \cQ^\eps_0(z_1)^2$ is integrable uniformly 
in $\eps$. This is essential in establishing the following convergence result.

\begin{prop}
Let $C_1(\eps)$ and $C_2(\eps)$ be the constants defined in~\eqref{eq:C1_C2}. 
Then there exists a random model $\widehat Z = (\widehat \Pi,\widehat \Gamma)$, 
independent of the choice of mollifier $\varrho$, such that for any $\theta < 
-\frac52 - \alpha_0 = \kappa$ and any compact set $\fraK$, one has 
\begin{equation}
 \E \seminormff{\widehat Z^\eps}{\widehat Z}_{\gamma;\fraK} \lesssim 
\eps^\theta\;,
\end{equation} 
provided $\gamma < \zeta$, where $\zeta$ is such that all moments of $K$ up to 
parabolic degree $\zeta$ vanish. 
\end{prop}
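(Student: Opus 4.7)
The plan is to apply the standard convergence criterion for renormalised models (cf.~\cite[Thm.~10.7]{Hairer2014} or \cite[Prop.~6.3]{BK2016}). This reduces the statement to establishing, for every symbol $\tau$ of strictly negative homogeneity and every Wiener chaos component $\widehat\cW^{(\eps;k)}_z\tau$ of $\widehat\Pi^\eps_z\tau$, bounds of the form
\[
\E\bigabs{\bigpscal{\widehat\cW^{(\eps;k)}_z\tau}{\psi^\lambda_z}}^2 \lesssim \lambda^{2\bar\alpha}
\]
for some $\bar\alpha > \abss{\tau}$, together with the analogous bounds on the differences $\widehat\cW^{(\eps;k)}_z\tau - \widehat\cW^{(\bar\eps;k)}_z\tau$ at rate $\eps^\theta$ for any $\theta<\kappa$. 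The corresponding control on $\widehat\Gamma^\eps_{z\bar z}$ follows from Lemma~\ref{lem:Gammazz} in combination with these chaos estimates, yielding convergence in probability of $\widehat Z^\eps$ to a limit $\widehat Z$, independent of the mollifier~$\varrho$, at the stated rate.

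For the finitely many symbols not involving any $\cI^Q_{nm}$, the required bounds are essentially those already obtained in \cite[Prop.~6.4]{BK2016}. The choice of $C_1(\eps)$ and $C_2(\eps)$ in~\eqref{eq:C1_C2} is dictated precisely by the cancellation of the divergences in the Wiener chaos expansions of $\widehat\Pi^\eps_z\RSV$ and $\widehat\Pi^\eps_z\RSWV$, producing the counter-terms $c_0(\eps)$, $c_1(\eps)$, $c_2(\eps)$ of~\eqref{eq:c0c1c2}. No additional renormalisation is needed for symbols containing $\cI^Q$-edges, as noted after the definition of $M_\eps$.

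The novel ingredient is to handle the countable family $\cI^Q_{nm}\tau$ with $\tau\in\cF_2$, and the compound symbols built from them. Using the representation~\eqref{eq:model_IQnm}, each Wiener chaos component of the corresponding renormalised model is a sum over $(n,m)\in\setnm$ of integrals in which one factor of $K$ is replaced by $K^Q_{nm}$. Lemma~\ref{lem:I00} is prototypical: the kernel bounds~\eqref{eq:KQnm_2}--\eqref{eq:KQnm_3} of Proposition~\ref{prop:KQnm}, combined with standard singular-kernel estimates for the remaining factors, produce $(n,m)$-indexed contributions whose modulus decays like $2^{-2n}/(m+2)$ (or better) in each chaos. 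These are summable over $\setnm$, since
\[
\sumnm \frac{2^{-2n}}{m+2} \lesssim \sum_{n\geqs 0}2^{-2n}\log\bigpar{1+2T 2^{\fraks_0 n}}<\infty\;,
\]
so that after summation in $(n,m)$ one recovers the required bound on the full chaos component of $\widehat\Pi^\eps_z\tau$.

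The main obstacle is a careful bookkeeping: for every symbol $\cI^Q_{nm}\tau$ and every Wiener chaos level, one must identify the worst singularity and verify that enough regularity is left to absorb the summation over $\setnm$. The structural reason this succeeds is that the extra factor $2^{-\fraks_0 n}$ in~\eqref{eq:KQnm_2}, absent for $K$, exactly compensates for the fact that $m$ ranges over $\Order{2^{\fraks_0 n}}$ values, so that after summing in $m$ one effectively recovers the scaling of $K$. Independence of $\widehat Z$ from $\varrho$ and the $\eps^\theta$ rate then follow, as in \cite{BK2016}, from the parallel analysis of the difference of two mollifications $K_\eps-K_{\bar\eps}$.
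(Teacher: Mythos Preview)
Your approach mirrors the paper's: reduce to \cite[Thm.~10.7]{Hairer2014}, invoke \cite[Prop.~6.4]{BK2016} for the finitely many symbols without $\cI^Q$-edges, and use Lemma~\ref{lem:I00} for the new family $\{\RSoWV_{nm}\}$. One point deserves correction, however. Your sentence ``each Wiener chaos component of the corresponding renormalised model is a sum over $(n,m)\in\setnm$ of integrals'' misplaces the summation: in the paper's construction each $\RSoWV_{nm}$ is a \emph{separate} basis vector, and its model~\eqref{eq:model_IQnm} involves only a single kernel $K^Q_{nm}$, not a sum. The sum over $(n,m)$ enters instead because the bound \cite[(10.4)]{Hairer2014} is a sum over all basis vectors of the (now infinite-dimensional) sector, so for \cite[Thm.~10.7]{Hairer2014} to apply one needs the individual second-moment constants $C^2_{nm}$ to be summable over~$\setnm$.

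Accordingly, the paper emphasises \emph{square}-summability of the bound~\eqref{eq:IQ00_nm}, yielding $\sum_{n,m}\bigpar{2^{-2n}/(m+2)}^2 \lesssim \sum_n 2^{-4n} < \infty$ with the $m$-sum bounded uniformly in $n$. Your plain-summability estimate is strictly stronger here (since the terms are bounded by $1$) and therefore also suffices, but it carries a spurious logarithmic factor in $n$ and, more importantly, is attached to the wrong mechanism. The distinction matters if one wants to track exactly which norm on the infinite-dimensional model space interacts with the Kolmogorov-type argument.
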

\begin{proof}
The proof follows along the lines of~\cite[Prop.~6.4]{BK2016}, which is closely 
based on~\cite[Thm.~10.22]{Hairer2014}. The crucial point to note here is 
that~\cite[Thm.~10.7]{Hairer2014} can still be applied in this case, even though 
there is a countable infinity of symbols such as $\RSoWV_{nm}$ that need to be 
renormalised. Indeed, the bound~\cite[(10.4)]{Hairer2014} involves a sum, over 
all basis vectors $\tau$ of a given sector, of the $p$-th power of the second 
moment of $\pscal{\widehat\Pi_0\tau}{\psi^\lambda_0}$. To be applicable, the 
bounds
\begin{align}
\nonumber
{\pscal{\widehat \Pi_z\RSoWV_{nm}}{\psi^\lambda_z}}^2 
&
\leqs C^2_{nm} \lambda^{2\abs{\tau}_\fraks+\kappa}\;, \\
\E\bigabs{\pscal{\widehat \Pi_z\RSoWV_{nm} - \widehat
\Pi_z^{\eps}\RSoWV_{nm}}{\psi^\lambda_z}}^2 
&
\leqs C^2_{nm} \eps^{2\theta}\lambda^{2\abs{\tau}_\fraks+\kappa}
\end{align}
should hold for some $\kappa, \theta > 0$, with proportionality 
constants $C^2_{nm}$ that are summable over all $(n,m)\in\setnm$, 
cf.~\cite[(10.2), (10.3)]{Hairer2014}.
By~\cite[Prop.~10.11]{Hairer2014}, this is the case if 
the Wiener chaos expansion of these $\tau$ satisfies the 
bounds
\begin{align}
\nonumber
\Bigabs{\bigpscal{(\widehat\cW^{(k)}\RSoWV_{nm})(z)}{(\widehat\cW^{(k)}
\RSoWV_{nm})(\bar z)}}
&\leqs C_{nm}^2 \sum_{\varsigma>0} \bigpar{\norm{z}_{\fraks} +
\norm{\bar z}_{\fraks}}^\varsigma \norm{z-\bar
z}_{\fraks}^{\bar\kappa+2\alpha-\varsigma}\;, \\
\Bigabs{\bigpscal{(\delta\widehat\cW^{(\eps;k)}\RSoWV_{nm})(z)}
{(\delta\widehat\cW^{(\eps;k)} \RSoWV_{nm})(\bar z)}}
&
\leqs C_{nm}^2 \eps^{2\theta} \sum_{\varsigma>0} 
\bigpar{\norm{z}_{\fraks} +
\norm{\bar z}_{\fraks}}^\varsigma \norm{z-\bar
z}_{\fraks}^{\bar\kappa+2\alpha-\varsigma}
\label{eq:rFHN04c}
\end{align}
for some $\bar\kappa,\theta>0$, where $\alpha=\abss{\RSoWV_{nm}}$ and the sums 
run over finitely many positive $\varsigma$. This in turns follows from the 
square-summability of integrals such as~\eqref{eq:IQ00_nm}. 
\end{proof}

The final step is to compute the renormalised equations corresponding to the 
renormalisation map $M_\eps$. It is straightforward to check that Lemma~6.5 and 
Proposition~6.7 in~\cite{BK2016} still hold in the present situation. It is 
thus sufficient to compute the non-positive-homogeneous part $\widehat F(U,V)$ 
of $M_\eps F(U,V)$, for a cubic nonlinearity $F$ as in~\eqref{eq:FUV}. This 
yields the following result, which is proved in 
Appendix~\ref{app:renorm2}. 

\begin{prop}
\label{prop:Fhat} 
In the situation just described, we have 
\begin{equation}
 \widehat F(u,v) 
 = F(u,v) + c_0(\eps) + c_1(\eps)u + c_2(\eps)v\;,
\end{equation} 
where the $c_i(\eps)$ are defined in~\eqref{eq:c0c1c2}. 
\end{prop}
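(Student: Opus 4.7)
The plan is to exploit the machinery of \cite[Lem.~6.5 and Prop.~6.7]{BK2016}, which (as noted) still applies here: these results reduce computing $\widehat F$ to extracting the non-positive-homogeneity part of $M_\eps F(U,V) - F(U,V)$ after substituting the fixed-point expansion~\eqref{eq:UV_expansion}. Since $M_\eps = \exp\{-C_1(\eps)L_1 - C_2(\eps)L_2\}$ and the generators $L_1, L_2$ contract only $\RSV$ and $\RSWV$ sub-trees respectively, the task reduces to locating these two sub-trees within $F(U,V)$.

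I would first record the key contractions: $L_1(\RSV) = \unit$; $L_1(\RSW) = 3\,\RSI$ (with combinatorial factor $3$); $L_1(\RSWo) = \RSoI$, using $\RSV \cdot \RSoI = \RSWo$; and $L_2(\RSWV) = \unit$ where $\RSWV = \RSV \cdot \RSY$. Proceeding monomial by monomial through~\eqref{eq:FUV}, I would locate exactly three $L_1$-contractible leading terms: $\beta_1 U^2 \supset \beta_1\,\RSV$ yields $-\beta_1 C_1(\eps)\,\unit$; $\gamma_1 U^3 \supset \gamma_1\,\RSW$ yields $-3\gamma_1 C_1(\eps)\,\RSI$; and $\gamma_2 U^2 V \supset \gamma_2\,\RSWo$ yields $-\gamma_2 C_1(\eps)\,\RSoI$. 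Crucially, all three $C_2(\eps)$-contributions originate from a \emph{single} $L_2$-contractible term: pairing two leading $\RSI$-factors in $\gamma_1 U^3$ with the $b_1(z)\,\RSY$-component of the expansion~\eqref{eq:UV_expansion} of $U$ produces $3\gamma_1 b_1(z)\,\RSV \cdot \RSY = 3\gamma_1 b_1(z)\,\RSWV$, which contracts under $L_2$ to $-3\gamma_1 b_1(z) C_2(\eps)\,\unit$. Since $b_1(z) = \beta_1 + 3\gamma_1\varphi(z) + \gamma_2\psi(z)$ and $\varphi\unit, \psi\unit$ reconstruct under $\cR$ to $u, v$ respectively, this single term furnishes exactly the pieces $-3\gamma_1\beta_1 C_2$, $-9\gamma_1^2 C_2$ and $-3\gamma_1\gamma_2 C_2$ of $c_0, c_1, c_2$, which combine with the $L_1$-contributions above to give~\eqref{eq:c0c1c2}.

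It remains to check that the monomials $\alpha_1 u, \alpha_2 v, \beta_2 uv, \beta_3 v^2, \gamma_3 uv^2, \gamma_4 v^3$ produce no further $\RSV$ or $\RSWV$ sub-trees contributing to the non-positive-homogeneity part of $M_\eps F(U,V) - F(U,V)$. The main obstacle is the infinite family of shifted symbols $\RSoY_{nm}$ appearing in $V$'s expansion, whose products with $U^2$ via $\gamma_2 U^2 V$ do generate sub-trees resembling $\RSWV$ but carrying an $\cI^Q$-edge in place of the inner $\cI$-edge; one must verify that $L_2$ (which targets only the genuine $\RSV\cdot\cI(\RSV) = \RSWV$) acts trivially on these, so that such terms contribute only the $L_1$-contraction already accounted for in $\gamma_2 U^2 V \supset \gamma_2\,\RSWo$. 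Granted this verification, collecting the contributions yields the claimed expressions for $c_0, c_1, c_2$ and completes the proof.
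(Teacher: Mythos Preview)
Your overall strategy matches the paper's, but the execution has a genuine gap that only gives the right answer by a cancellation of two errors. First, the claim that ``all three $C_2(\eps)$-contributions originate from a \emph{single} $L_2$-contractible term'' is false. In $\gamma_1 U^3$ you must also pair $\RSV$ with the components $\gamma_1\RSIW$ and $\gamma_2\RSIWo$ of $U$, producing $3\gamma_1^2\,\RSWW$ and $3\gamma_1\gamma_2\,\RSWWo$; these \emph{do} contain $\RSWV$ as a sub-tree (choose two of the three, respectively the two $\RSI$-type, upper leaves), and contract under $L_2$ to $-9\gamma_1^2 C_2\,\RSI$ and $-3\gamma_1\gamma_2 C_2\,\RSoI$. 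Second, the identification ``$\varphi\unit,\psi\unit$ reconstruct under $\cR$ to $u,v$'' is incorrect: $\cR(\varphi\unit)=\varphi\neq u=\cR U$. The paper's procedure (its last sentence) is to re-express $\RSI$ and $\RSoI$ via $\RSI=U-\varphi\unit+\dots$ and $\RSoI=V-\psi\unit+\dots$ using~\eqref{eq:UV_expansion}. You also omit the sub-leading $L_1$-contractions $3\gamma_1\varphi\,\RSV\subset\gamma_1 U^3$ and $\gamma_2\psi\,\RSV\subset\gamma_2 U^2V$, which give $-3\gamma_1\varphi C_1\unit$ and $-\gamma_2\psi C_1\unit$.

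With your accounting, the correction $-\beta_1 C_1\unit-3\gamma_1 C_1\RSI-\gamma_2 C_1\RSoI-3\gamma_1 b_1 C_2\unit$ does \emph{not} take the form $c_0\unit+c_1 U+c_2 V$ with constant $c_i$ once one makes the correct substitution $\RSI\mapsto U-\varphi\unit$, $\RSoI\mapsto V-\psi\unit$: the polynomial coefficient still depends on $\varphi,\psi$. It is precisely the missing $L_2$-terms at levels $\RSI,\RSoI$ that supply the $C_2$-parts of $c_1,c_2$ \emph{and} cancel the $\varphi,\psi$-dependence of $-3\gamma_1 b_1 C_2$, while the missing $L_1$-terms cancel the $\varphi,\psi$-dependence generated by the substitution at the $C_1$-level. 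Your shortcut $\varphi\to u$, $\psi\to v$ accidentally mimics this cancellation and lands on the right $c_i$, but is not a valid derivation.
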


The proof of Theorem~\ref{thm:main} now follows in the same way as 
in~\cite[Sec.~7]{BK2016}. 


\paragraph*{Acknowledgements.}

We would like to thank Tom Holding and Martin Hairer for pointing out the error 
in~\cite[Lem.~4.8]{BK2016}, and Tom, Martin, Yvain Bruned, Cyril Labb\'e and 
Hendrik Weber for their advice on preliminary versions of this erratum. 
\added[id=NB]{We also thank the anonymous referee for providing constructive 
comments, that have led to improvements in the presentation.}


\appendix

\section{Proof of Proposition~\ref{prop:KQnm}}
\label{app:KQnm} 

Let $\varphi:\R\to[0,1]$ be a partition of unity, i.e., a
function satisfying
\begin{itemize}
\item	$\varphi$ is of class $\cC^\infty$ and of compact support, say $[-1,1]$;
\item	for any $t\in\R$ one has 
\begin{equation}
 \sum_{m\in\Z} \varphi(m+t) = 1\;.
\end{equation} 
\end{itemize}

\begin{remark}
An example of such a function would be a smooth even function, satisfying
$\varphi(\theta) = 1 - \varphi(1-\theta)$ for any $\theta\in[0,1]$. For instance 
one can take
\begin{equation}
 \varphi(\theta) = \frac12 + \frac12\tanh\biggpar{\frac{1}{\theta} - 
\frac1{1-\theta}}
 \qquad \forall \theta\in(0,1)
\end{equation} 
and set $\varphi(0)=1$, $\varphi(\theta)=0$ for $\theta\geqs1$ and
$\varphi(-\theta)=\varphi(\theta)$ for all $\theta$. 
\end{remark}

Given such a function $\varphi$, we set 
\begin{equation}
 \varphi_n(\theta) = \varphi(2^{\fraks_0n} \theta) 
\end{equation} 
for all $n\in\N_0$. Then $\varphi_n$ is supported on
$[-2^{-\fraks_0n},2^{-\fraks_0n}]$. Observe that for any $n\in\N_0$ and any
$t\in\R$, we have 
\begin{equation}
 \sum_{m\in\Z} \varphi_n(2^{-\fraks_0 n}m + t) = 1\;,
\end{equation} 
and thus 
\begin{equation}
 \sum_{m\in\Z} Q_{nm}(t) = Q(t)
 \qquad
 \text{where}
 \quad 
 Q_{nm}(t) = Q(t)  \varphi_n(2^{-\fraks_0 n}m + t)\;.
\end{equation} 
Since $Q$ is compactly supported, the above sum only contains a finite number
of terms, of order~$2^{\fraks_0 n}$. In fact, for any given $t$, there are at
most two nonzero terms in the sum, and $Q_{nm}$ is supported on the
interval 
\begin{equation}
\label{eq:support_Qnm} 
 [(m-1)2^{-\fraks_0 n}, (m+1)2^{-\fraks_0 n}]\;.
\end{equation} 
Recall that the kernel $K$ being regularizing of order $\beta$ 
means that $K$ and its derivatives satisfy the bounds given 
in~\cite[Assumption~5.1]{Hairer2014}.
Thus if we define for $n\in\N_0$, $m\in\Z$  
\begin{equation}
\nonumber
 K^Q_{nm}(t,x) = \int_{t-2T}^t Q_{nm}(t-s) K_n(s,x) \6s
 = \int_0^{2T} Q_{nm}(u) K_n(t-u,x) \6u\;,
\end{equation} 
then we obtain a decomposition
\begin{equation}
 K^Q(z) = \sum_{n\geqs0} \sum_{m=-1}^{1+2T2^{\fraks_{0}n}} K^Q_{nm}(z)\;,
\end{equation} 
where the range of $m$ is due to~\eqref{eq:support_Qnm} and the fact that $Q$
is supported on $[0,2T]$. 

 First note that since $Q_{nm}$ is supported on the interval
given in~\eqref{eq:support_Qnm}, $K^Q_{nm}(t,x)$ can be nonzero only if 
\begin{equation}
 (m-2)2^{-\fraks_0 n} \leqs t \leqs (m+2)2^{-\fraks_0 n}
\end{equation} 
and $x$ is in a ball of radius $2^{-n}$. The condition on $t$ is equivalent to
$\abs{t-m2^{-\fraks_0n}}\leqs 2^{1-\fraks_0 n}$, from which~\eqref{eq:KQnm_1}
follows. 

Since $\varphi$ takes values in $[0,1]$, we have 
\begin{equation}
 \abs{Q_{nm}(t)} \leqs \abs{Q(t)} \leqs \norm{Q}_\infty
 := \sup_{t\in[0,2T]} \abs{Q(t)}\;.
\end{equation} 
Therefore, by Condition~(5.4) in~\cite[Assumption~5.1]{Hairer2014}, we have
\begin{align}
\nonumber
\abs{\D^k K^Q_{nm}(z)}
&\leqs \int_{(m-1)2^{-\fraks_0n}}^{(m+1)2^{-\fraks_0n}}
\abs{Q_{nm}(u)} \abs{\D^k K_n(t-u,x)} \6u \\
\nonumber
&\leqs C 2^{(\abs{\fraks}-\beta+\abss{k})n}
\int_{(m-1)2^{-\fraks_0n}}^{(m+1)2^{-\fraks_0n}} \abs{Q_{nm}(u)}\6u \\
&\leqs 2C 2^{(\abs{\fraks}-\fraks_0-\beta+\abss{k})n} \norm{Q_{nm}}_\infty\;,
\end{align}
which implies~\eqref{eq:KQnm_2} with $C_Q = 2C\norm{Q}_\infty$. 

Finally, we have 
\begin{align}
\nonumber 
\int_{\R^{d+1}}z^\ell \D^k K^Q_{nm}(z)\6z 
&= \int_{(m-1)2^{-\fraks_0n}}^{(m+1)2^{-\fraks_0n}} Q_{nm}(u) 
\int_{\R^{d+1}} z^\ell \D^k K_n(t-u,x)\6z \6u \\
&= \int_{(m-1)2^{-\fraks_0n}}^{(m+1)2^{-\fraks_0n}} Q_{nm}(u) 
\int_{\R^{d+1}} (\bar z + (u,0))^\ell \D^k K_n(\bar z)\6\bar z \6u\;.
\end{align}
It follows from Condition~(5.5) in~\cite[Assumption~5.1]{Hairer2014}, applied
to all $\ell'$ of degree less or equal $\ell$, that there exists a constant
$C'$, depending only on $k$ and $\ell$, such that the absolute value of the
integral over $\R^{d+1}$ is bounded by $C'2^{-\beta n}$ uniformly in $n$.
Therefore, \eqref{eq:KQnm_3} follows with $C_Q=2C'$.
\qed


\section{Proof of Lemma~\ref{lem:modelIQ}}
\label{app:model} 

As in~\cite[Lem.~5.19]{Hairer2014}, the cases $2^{-n} > \lambda$ and
$2^{-n}\leqs\lambda$ are treated differently. We start by dealing with the case
$2^{-n} > \lambda$. 
The \added[id=NB]{base-time independence} assumption $\Pi_{z+h_{nm}}\tau = 
\Pi_z\tau$ implies
\begin{equation}
 \bigpscal{\Pi\added[id=NB]{_z}\tau}{\D^k K^Q_{nm}(z-\cdot)}
 =  \bigpscal{\Pi_{z+h_{nm}}\tau}{\D^k K^Q_{nm}(z-\cdot)}\;.
\end{equation} 
Since the singularity of $K^Q_{nm}(z-\cdot)$ is located at 
$z+h_{nm}$, we can apply~\cite[Rem.~2.21]{Hairer2014}, which together with the
bound~\eqref{eq:KQnm_2} on $\abs{\D^k K^Q_{nm}}$ yields 
\begin{equation}
\label{eq:bound_Pitau_DkKQ} 
 \bigabs{\bigpscal{\Pi\added[id=NB]{_z}\tau}{\D^k K^Q_{nm}(z-\cdot)}}
 \lesssim \norm{\Pi}_{\alpha;\fraK_z} 2^{(\abss{k}-\fraks_0-\alpha-\beta)n}\;.
\end{equation} 
Note that owing to base-\added[id=NB]{time} independence of the model, we have 
avoided making use of $\Gamma$ as in~\cite[Lem.~5.18]{Hairer2014}. We now use 
the Taylor expansion representation of~\cite[Appendix~A]{Hairer2014} to get 
\begin{equation}
\label{eq:KQalpha_int} 
  K^{Q;\alpha}_{nm;z\bar z}(z') 
 = \sum_{\ell\in\partial A} \int_{\R^{d+1}} \D^\ell K^Q_{nm}(\bar
z+h-z')\cQ^\ell(z-\bar z,\6h)
\end{equation} 
where $A = \setsuch{\ell}{\abss{\ell} < \alpha+\beta}$ and $\cQ^\ell$ is a
measure with total mass $\norm{z-\bar z}_\fraks^{\abss{\ell}}$. 
It follows from~\eqref{eq:bound_Pitau_DkKQ} that 
\begin{equation}
 \bigabs{\bigpscal{\Pi\added[id=NB]{_z}\tau}{K^{Q;\alpha}_{nm;z\bar z}}} 
 \lesssim \norm{\Pi}_{\alpha;\fraK_z} 
 \sum_{\ell\in\partial A} \norm{z-\bar z}_\fraks^{\abss{\ell}} 
 2^{(\abss{\ell}-\fraks_0-\alpha-\beta)n}\;.
\end{equation} 
Together with the fact that 
\begin{equation}
\label{eq:bound_integral_power} 
 \int_{\R^{d+1}} \norm{z-\bar z}^{\abs{\ell}_\fraks}_\fraks
\psi^\lambda_z(\bar z)\6\bar z \lesssim \lambda^{\abs{\ell}_\fraks}
\end{equation}
this yields 
\begin{align}
\nonumber
 \sumnmarg{2^{-n} > \lambda}  
 \biggabs{\int_{\R^{d+1}}  
\bigpscal{\Pi\added[id=NB]{_z}\tau}{K^{Q;\alpha}_{nm;z\bar
z}} \psi^\lambda_z(\bar z) \6\bar z}
& \lesssim \sum_{\ell\in\partial A} \sumnmarg{2^{-n} > \lambda} 
2^{(\abss{\ell}-\fraks_0-\alpha-\beta)n} \lambda^{\abs{\ell}_\fraks}
\norm{\Pi}_{\alpha;\fraK_z} \\
& \lesssim \lambda^{\alpha+\beta} \norm{\Pi}_{\alpha;\fraK_z}\;.
\end{align} 
In the case $2^{-n}\leqs \lambda$, we use the representation 
\begin{equation}
\label{eq:decomp_YZ} 
 \int_{\R^{d+1}} \bigpscal{\Pi\added[id=NB]{_z}\tau}{K^{Q;\alpha}_{nm;z\bar z}}
\psi^\lambda_z(\bar z) \6\bar z 
 = \bigpscal{\Pi\added[id=NB]{_z}\tau}{Y^\lambda_{nm}} 
 - \sum_{\abss{\ell} < \alpha+\beta} 
\bigpscal{\Pi\added[id=NB]{_z}\tau}{Z^{\lambda}_{nm;\ell}}
\end{equation} 
where 
\begin{align}
\nonumber
Y^\lambda_{nm}(z') &= 
 \int_{\R^{d+1}} K^Q_{nm} (\bar z - z') \psi^\lambda_z(\bar z)\6\bar z\;, \\
 Z^\lambda_{nm;\ell}(z') &= 
 \D^\ell K^Q_{nm} (\bar z - z') 
 \int_{\R^{d+1}} \frac{(\bar z-z)^\ell}{\ell!}\psi^\lambda_z(\bar z)\6\bar z\;.
\end{align}
Here one readily checks that the arguments used in the proof
of~\cite[Lem.~5.19]{Hairer2014} to bound 
$\pscal{\Pi\added[id=NB]{_z}\tau}{Y^\lambda_{nm}}$ and
$\pscal{\Pi\added[id=NB]{_z}\tau}{Z^\lambda_{nm;\ell}}$ are not affected by the 
location of the
support of $K^Q_{nm}$, so that as a result we obtain in the same way as there 
the bounds 
\begin{align}
\nonumber
 \bigabs{\bigpscal{\Pi\added[id=NB]{_z}\tau}{Y^\lambda_{nm}}} 
 &\lesssim \lambda^\alpha 2^{-(\fraks_0+\beta)n}\;, \\
 \bigabs{\bigpscal{\Pi\added[id=NB]{_z}\tau}{Z^\lambda_{nm;\ell}}}
 &\lesssim \lambda^{\abss{\ell}} 2^{(\abss{\ell}-\fraks_0-\alpha-\beta)n}\;.
 \label{eq:bounds_YZ} 
\end{align} 
This yields 
\begin{equation}
 \biggabs{\int_{\R^{d+1}} 
\bigpscal{\Pi\added[id=NB]{_z}\tau}{K^{Q;\alpha}_{nm;z\bar z}}
\psi^\lambda_z(\bar z) \6\bar z} 
\lesssim 2^{-\fraks_0 n} \lambda^{\alpha+\beta} 
\sum_{\abss{\ell} < \alpha+\beta} (\lambda 2^n)^{\abss{\ell}-\alpha-\beta}\;,
\end{equation} 
and summing over $(n,m)$ with $2^{-n}\leqs \lambda$ gives the result. 
\qed


\section{Proof of Lemma~\ref{lem:Gammazz}}
\label{app:Gammazz} 

Using the fact that $\pscal{f_z}{X^\ell} = (-z)^\ell$ and multiplicativity of
$\pscal{f_z}{\cdot}$, we obtain 
\begin{equation}
 \bigpscal{f_z}{X^\ell\cJ^Q_{k+\ell}\tau} = -(-z)^\ell
\chi^{k+\ell}_\tau(z)\;.
\end{equation} 
From the expression~\eqref{eq:DeltaIQ} of $\Delta(\cI^Q\tau) $ we thus deduce 
\begin{equation}
\label{eq:FzIQ} 
F_z \cI^Q\tau 
= (\Id\otimes f_z)\Delta(\cI^Q\tau) 
= \cI^Q\tau - \sum_{\abss{k+\ell}<\alpha+\beta} \frac{X^k}{k!} 
\frac{(-z)^\ell}{\ell!} \chi_\tau^{k+\ell}(z)\;.
\end{equation}
In the basis $(\set{X^k}_{\abss{k}<\alpha+\beta},\cI^Q\tau)$ we can thus
identify $F_z$ and its inverse with matrices 
\begin{equation}
 F_z = 
 \begin{pmatrix}
 T(z) & T_*(z) \\ 0 & 1  
 \end{pmatrix}\;, 
 \qquad
 F_z^{-1} = 
 \begin{pmatrix}
 T(z)^{-1} & -T(z)^{-1}T_*(z) \\ 0 & 1  
 \end{pmatrix}\;.
\end{equation} 
Here $T(z)$, which represents the action of $F_z$ on monomials $X^k$, is an
upper triangular matrix with elements 
\begin{equation}
 \bigbrak{T(z)}_{kj} = \frac{j!}{k!(j-k)!}(-z)^{j-k}\;,
\end{equation}
while $T_*(z)$ is a column vector given by the coefficients of $X^k$ in the sum
on the right-hand side of~\eqref{eq:FzIQ}. 
It follows that $\Gamma_{z\bar z}$ is represented by the matrix 
\begin{equation}
 F_z^{-1} F_{\bar z} =
 \begin{pmatrix}
 T(z)^{-1}T(\bar z) & T(z)^{-1}[T_*(\bar z) - T_*(z)] \\ 
 0 & 1
 \end{pmatrix}\;.
\end{equation} 
Since $F_z^{-1} = F_{-z}$ for elements of the polynomial part 
of the regularity structure, one has $T(z)^{-1}=T(-z)$. A direct computation of
the upper right matrix element then yields the result, making use of the
binomial identity. 
\qed

\begin{remark}
Another way of deriving the result is by using the identity 
\begin{equation}
 F_z^{-1} \cI^Q\tau = \cI^Q\tau 
 + \sum_{\abss{k} < \alpha+\beta} \frac{X^k}{k!} \chi^k_\tau(z)\;,
\end{equation} 
which can be readily checked by showing that $F_z^{-1}F_z = \Id$. 
\end{remark}


\section{Proof of Theorem~\ref{thm:Schauder}}
\label{app:Schauder} 

It follows from~\cite[Prop.~6.16 and Thm.~7.1]{Hairer2014} that $\cK^Q_\gamma
f_1$ and $\cK^Q_\gamma \varphi\unit$ satisfy the theorem. It thus remains to
prove the statement for $\cK^Q_\gamma f_2$ and check the convolution identity
for $f_3$. By linearity, it is sufficient to consider the case 
\begin{equation}
\label{eq:def_f2} 
 f_2(z) = a(z)\tau + \sum_{i=1}^3 a_i(z) X_i\tau\;, 
 \qquad 
 \tau\in\set{\RSV,\RSVo,\RSVoo}\;,
\end{equation} 
the cases $f_2(z)=a(z)\tau$ with $\tau\in\set{\RSI,\RSoI}$ being similar but 
simpler. Therefore we fix $\alpha=\abss{\tau} =-1-2\kappa$. To further simplify 
the notations, we will drop the notation $\sum_{i=1}^3$, and not indicate the 
dependence of proportionality constants on $\norm{\Pi}$ and $\norm{\Gamma}$.

It is crucial to keep track of the sign of the first component $t$ of $z$.  
We write $\Rplus f_2(z) = a^+(z)\tau + a^+_i(z)X_i\tau$, where $a^+(z) = 
a(z)\indexfct{t>0}$ and similarly for $a^+_i(z)$, and
$t_+ = t\indexfct{t>0}$. We also use the notations $f^+_{nm}(z) =
\ensuremath{f_2}(z+h_{nm})\indexfct{t>0}$ and  
\begin{equation}
 b^{k,+}_{nm}(z) = \pscal{\cR^{nm}f^+_{nm} -
\Pi^{nm}_z f^+_{nm}(z)}{\D^k\KQhat_{nm}(z-\cdot)}\;. 
\end{equation}
For all $z,\bar z\in\R^{d+1}$, $(n,m)\in\setnm$
and $\tau$ as in~\eqref{eq:def_f2}, we have the relations 
\begin{equation}
 \Pi^{nm}_z\tau = \Pi^{nm}_{\bar z}\tau\;, \qquad 
 \Pi^{nm}_zX_i\tau = \Pi^{nm}_{\bar z}X_i\tau
 + (\bar z_i-z_i)\Pi^{nm}_z\tau\;,
\end{equation} 
and the estimates
\begin{align}
\label{eq:bound_psi} 
 \bigabs{\hat\chi^k_{\tau,nm}(z)}
 &\lesssim 2^{(\abss{k}-\fraks_0-\alpha-\beta)n}\;, \\
\label{eq:bound_Xipsi} 
 \bigabs{\hat\chi^k_{X_i\tau,nm}(z)}
 &\lesssim 2^{(\abss{k}-\fraks_0-\alpha-\beta-1)n}\;, \\
\label{eq:bound_bk} 
\bigabs{\Rplus b^{k,+}_{nm}(z)} &\leqs 
(1\wedge t_+)^{(\eta-\gamma)/\fraks_0} \indexfct{t>0}
2^{(\abss{k}-\fraks_0-\gamma-\beta)n} 
\normDgamma{\Rplus f_2}_{\gamma,\eta;\bar\fraK}\;.
\end{align}
Indeed, the first two bounds follows in the same way 
as~\eqref{eq:bound_Pitau_DkKQ} (using the fact that $\bar z_i-z_i=0$ for 
time-translations), while the last one is a consequence of the 
improved reconstruction theorem~\cite[Lem.~6.7]{Hairer2014}, \eqref{eq:KQnm_2} 
and the fact that $f^+_{nm} \in \cD^{\gamma,\eta}(\Pi^{nm})$. 
This shows that the infinite series in the definition~\eqref{eq:def_KQf2} of 
$\cK^Q_\gamma$ are indeed convergent.
Note that this is exactly the point where the introduction of 
shifted models is necessary, since otherwise $b^k_{nm}$ would only satisfy a  
weaker bound of the form~\eqref{eq:weakbound}, which guarantees summability 
only for $\abss{k} < \alpha+\beta$.

\subsection{Bounds on $\normDgamma{\Rplus\cK^Q_\gamma\Rplus
f}_{\gamma+\beta,\bar\eta;T}$}

Since $\Gamma_{z\bar z}\tau = \tau$ and $\Gamma_{z\bar z}X_i\tau = 
X_i\tau+(z_i-\bar z_i)\tau$, the fact that $f_2\in\cD^{\gamma,\eta}$ implies 
that 
\begin{align}
\label{eq:bound_a1} 
\abs{a^+(z)} 
&\lesssim
(1\wedge t_+)^{(\eta-\alpha)/\fraks_0} \indexfct{t>0}
\normDgamma{\Rplus f_2}_{\gamma,\eta;\bar\fraK}\;, \\
\nonumber
\abs{a^+_i(z)}  
&\lesssim 
 (1\wedge t_+)^{(\eta-\alpha-1)/\fraks_0} \indexfct{t>0}
\normDgamma{\Rplus f_2}_{\gamma,\eta;\bar\fraK}\;, \\
\nonumber
\abs{a^+(z)-a^+(\bar z)-(z_i-\bar z_i)a^+_i(\bar z)}  
&\lesssim
\norm{z-\bar z}_\fraks^{\gamma-\alpha}
(1\wedge t_+\wedge\bar t_+)^{(\eta-\gamma)/\fraks_0}
\indexfct{t,\bar t>0}
\normDgamma{\Rplus f_2}_{\gamma,\eta;\bar\fraK}\;,\\
\nonumber
\abs{a^+_i(z)-a^+_i(\bar z)}  
&\lesssim
\norm{z-\bar z}_\fraks^{\gamma-\alpha-1}
(1\wedge t_+\wedge\bar t_+)^{(\eta-\gamma)/\fraks_0}
\indexfct{t,\bar t>0}
\normDgamma{\Rplus f_2}_{\gamma,\eta;\bar\fraK}
\end{align}
holds for all $z,\bar z\in\bar\fraK$ (the $1$-fattening of $\fraK$). We start by
estimating 
\begin{equation}
 \norm{\Rplus\cK^Q_\gamma\Rplus f_2}_{\gamma+\beta,\bar\eta;\fraK}
 = \sup_{z\in\fraK}
\max_{\delta\in\set{\alpha+\beta,\alpha+\beta+1,0,\dots,\intpart{\gamma+\beta}}}
\frac{\norm{\Rplus\cK^Q_\gamma\Rplus f_2}_\delta}{1\wedge
t_+^{(\bar\eta-\delta)\wedge0}}\;.
\end{equation} 
In the case $\delta=\alpha+\beta$, we have
\begin{align}
\nonumber
\norm{\Rplus\cK^Q_\gamma\Rplus f_2(z)}_{\alpha+\beta} 
&= \supnm \abs{a^+(z+h_{nm})}\indexfct{t>0} \\
&\leqs (1\wedge t_+)^{(\eta-\alpha)/\fraks_0}\indexfct{t>0} \normDgamma{\Rplus
f_2}_{\gamma,\eta;\bar\fraK}\;.
\end{align}
For $\bar\eta = \eta+\beta$, this provides the first bound required to obtain
$\Rplus\cK^Q_\gamma\Rplus f_2 \in \cD^{\gamma+\beta,\eta+\beta}$. 
Note that the factor $\indexfct{t>0}$, which is due to the first $\Rplus$, is
required to kill the singularities of $a^+(z+h_{nm})$ for negative time. 
In the particular case $\fraK = O_T = (-\infty,T]\times\R^d$, we can further
bound the factor $(1\wedge t_+)^{(\eta-\alpha)/\fraks_0}\indexfct{t>0}$ by
$T^{\kappa/\fraks_0}(1\wedge
t_+)^{(\eta-\alpha-\kappa)/\fraks_0}\indexfct{t>0 }$, with $O_T$
instead of $\bar\fraK$ since the kernel is non-anticipative. This yields one
of the bounds required to prove~\eqref{eq:schauder2}. 
The case $\delta=\alpha+\beta+1$ is treated similarly. 

For polynomial components of exponent $\ell\in\N$, 
\eqref{eq:def_KQf2} implies
\begin{align}
\nonumber
 \norm{\Rplus\cK^Q_\gamma\Rplus f_2(z)}_{\ell}
 \lesssim 
 \sum_{\abss{k}=\ell} \frac{1}{k!}\biggl| \sumnm \Bigl[
& b^{k,+}_{nm}(z) \indexfct{\ell<\gamma+\beta}
+ a^+(z+h_{nm})\hat\chi^k_{\tau,nm}(z)\indexfct{\ell<\alpha+\beta} \\
&{}+ a^+_i(z+h_{nm})\hat\chi^k_{X_i\tau,nm}(z)\indexfct{\ell<\alpha+\beta+1}
\Bigr]\biggr|\indexfct{t>0}\;.
\label{eq:RKQRf2l} 
\end{align} 
Here we treat separately the cases $1\wedge
t_+^{1/\fraks_0} \geqs 2^{-n}$ and $1\wedge t_+^{1/\fraks_0} < 2^{-n}$. In the
first case, we use the bounds
\begin{align}
\nonumber
\bigabs{a^+(z+h_{nm})\hat\chi^k_{\tau,nm}(z)} 
&\lesssim 
(1\wedge t_+)^{(\eta-\alpha)/\fraks_0} 
2^{(\abss{k}-\fraks_0-\alpha-\beta)n}
\normDgamma{\Rplus f_2}_{\gamma,\eta;\bar\fraK}\;, \\
\bigabs{a^+_i(z+h_{nm})\hat\chi^k_{X_i\tau,nm}(z)} 
&\lesssim 
(1\wedge t_+)^{(\eta-\alpha-1)/\fraks_0} 
2^{(\abss{k}-\fraks_0-\alpha-\beta-1)n}
\normDgamma{\Rplus f_2}_{\gamma,\eta;\bar\fraK}\;, 
\label{eq:bounds_apsibnm} 
\end{align}
which follow directly from~\eqref{eq:bound_psi}, \eqref{eq:bound_Xipsi} 
and~\eqref{eq:bound_a1}. Using~\eqref{eq:bound_bk} and summing over the relevant 
$(n,m)$ yields indeed a bound of order $(1\wedge 
t_+)^{[(\eta+\beta-\abss{k})\wedge0]/\fraks_0}$ for~\eqref{eq:RKQRf2l}, and in 
the case $\fraK=O_T$ we can again extract a factor $T^{\kappa/\fraks_0}$ by 
decreasing $\bar\eta$.

In the case $1\wedge t_+^{1/\fraks_0} < 2^{-n}$, if $\ell < \alpha+\beta$ we
use \eqref{eq:def_bknm} to get
\begin{equation}
 b^{k,+}_{nm}(z) + a^+(z+h_{nm})\hat\chi^k_{\tau,nm}(z) + 
a^+_i(z+h_{nm})\hat\chi^k_{X_i\tau,nm}(z)
 = \bigpscal{\cR^{nm}f_{nm}^+}{\D^k\KQhat_{nm}(z-\cdot)}\;.
\end{equation} 
Here we apply~\cite[Prop.~6.9]{Hairer2014}, which states that
$\cR^{nm}f_{nm}^+\in\cC^\eta_\fraks$, showing that the above quantity has order
$2^{(\abss{k}-\fraks_0-\beta-\eta)n}$. Since by assumption
$\abss{k}-\beta-\eta\neq0$, regardless of its sign, summing over $(n,m)$ yields
a bound of order $(1\wedge t_+)^{[(\eta+\beta-\abss{k})\wedge0]/\fraks_0}$.
If $\alpha+\beta < \ell < \alpha+\beta+1$, we use  
\begin{equation}
 b^{k,+}_{nm}(z) + a^+_i(z+h_{nm})\hat\chi^k_{X_i\tau,nm}(z)
 = \bigpscal{\cR^{nm}f_{nm}^+}{\D^k\KQhat_{nm}(z-\cdot)} 
 - a^+(z+h_{nm})\hat\chi^k_{\tau,nm}(z)\;,
\end{equation} 
so that a bound of the same order follows by combining the two previous
estimates. The case $\ell > \alpha+\beta+1$ is treated similarly.  

It remains to obtain estimates involving two different points $z,\bar z$.
Here we first note that the definition of $\fraK_P$ entering the
definition of $\normDgamma{f}$ implies that if $(z,\bar z)\in\fraK_P$ then $t$
and $\bar t$ necessarily have the same sign and are comparable. Thus we only 
need to consider the case where both $t$ and $\bar t$ are strictly positive, 
and we may drop one of the factors $\Rplus$. 

Lemma~\ref{lem:Gammazz} extends naturally to $\cI^Q_{nm}\tau$ and 
$\hat\chi^k_{nm}(z)$. Proceeding similarly as in Appendix~\ref{app:Gammazz}, 
but in the basis 
$(\set{X^k}_{\abss{k}<\alpha+\beta},\cI^Q_{nm}\tau,\cI^Q_{nm}X_i\tau)$, we 
also obtain 
\begin{alignat}{2}
\nonumber
\Gamma_{z\bar z}\cI^Q_{nm}(X_i\tau) 
={}& 
\cI^Q_{nm}(X_i\tau) + (z_i -\bar z_i)\cI^Q_{nm}\tau \span\span \\
\nonumber
&{}+ 
\sum_{\abss{k}<\alpha+\beta+1} \frac{X^k}{k!} 
\biggl[ &&\hat\chi^k_{X_i\tau,nm}(z) - 
\sum_{\abss{\ell}<\alpha+\beta+1-\abss{k}}\frac{(z-\bar z)^\ell}{\ell!}
\hat\chi^{k+\ell}_{X_i\tau,nm}(\bar z) \\
&&& 
+ (z_i-\bar z_i) \hat\chi^k_{\tau,nm}(z) \indexfct{\abss{k}<\alpha+\beta}
\biggr]\;.
\end{alignat}
This yields the expression 
\begin{align}
\nonumber
\cK^Q_\gamma\Rplus f_2(z) - \Gamma_{z\bar z}\cK^Q_\gamma\Rplus
f_2(\bar z) 
= \sumnm \biggl\{&Q^{\alpha+\beta}_{nm}(z,\bar z)\cI^Q_{nm}\tau 
+ Q^{\alpha+\beta+1}_{nm}(z,\bar z)\cI^Q_{nm}(X_i\tau) \\
&+ \sum_{\abss{k} < \gamma+\beta} \frac{X^k}{k!}Q^k_{nm}(z,\bar z)
\biggr\}\;, 
\end{align} 
where 
\begin{align}
\nonumber
Q^{\alpha+\beta}_{nm}(z,\bar z) ={}& a^+(z+h_{nm}) - a^+(\bar z+h_{nm})
- (z_i-\bar z_i)a^+_i(\bar z+h_{nm})\;, \\
\nonumber
Q^{\alpha+\beta+1}_{nm}(z,\bar z) ={}& a^+_i(z+h_{nm}) 
- a^+_i(\bar z+h_{nm})\;, \\
\nonumber
Q^k_{nm}(z,\bar z) ={}&  
b^{k,+}_{nm}(z) - \sum_{\abss{\ell} < \beta+\gamma-\abss{k}} 
\frac{(z-\bar z)^\ell}{\ell!} b^{k+\ell,+}_{nm}(\bar z) \\
\nonumber
&{}+ Q^{\alpha+\beta}_{nm}(z,\bar z)
\hat\chi^k_{\tau,nm}(z) \indexfct{\abss{k} < \alpha+\beta} \\
&{}+ Q^{\alpha+\beta+1}_{nm}(z,\bar z)
\hat\chi^k_{X_i\tau,nm}(z) \indexfct{\abss{k} < \alpha+\beta+1}
\end{align}
(note that the terms in $a^+(z+h_{nm})\hat\chi^{k+\ell}_{\tau,nm}(\bar z)$
stemming from $\Gamma_{z\bar z}\cI^Q_{nm}$ and $\Gamma_{z\bar z}X^k$ in the
first sum over $k$ in~\eqref{eq:def_KQf2} cancel). 
For the components of non-integer regularity, we obtain
using~\eqref{eq:bound_a1} the required bounds
\begin{align}
\nonumber
 \bigabs{Q^{\alpha+\beta}_{nm}(z,\bar z)} 
&\lesssim
\norm{z-\bar z}_\fraks^{\gamma-\alpha}
(1\wedge t_+\wedge \bar t_+)^{(\eta-\gamma)/\fraks_0}
\normDgamma{\Rplus f_2}_{\gamma,\eta;\bar\fraK}\;, \\
 \bigabs{Q^{\alpha+\beta+1}_{nm}(z,\bar z)} 
&\lesssim 
\norm{z-\bar z}_\fraks^{\gamma-\alpha-1}
(1\wedge t_+\wedge \bar t_+)^{(\eta-\gamma)/\fraks_0}
\normDgamma{\Rplus f_2}_{\gamma,\eta;\bar\fraK}\;,
\end{align} 
from which a factor $T^{\kappa/\fraks_0}$ can be extracted as before. 

Finally, in the case of polynomial terms, we now consider three 
different regimes, depending on the value of $2^{-n}$ compared to 
$\norm{z-\bar z}_\fraks$  and $\frac12(1\wedge t_+\wedge \bar
t_+)^{1/\fraks_0})$ (recall that for $(z,\bar z)\in\fraK_P$ we always have 
$\norm{z-\bar z}_\fraks\leqs(1\wedge t_+\wedge \bar
t_+)^{1/\fraks_0}$). 
In the case $2^{-n} \leqs \norm{z-\bar z}_\fraks$, we again estimate separately
the summands in $Q^k_{nm}(z,\bar z)$, yielding the bound 
\begin{equation}
\label{eq:boundQkmn1} 
 \sumnmarg{2^{-n} \leqs \norm{z-\bar z}_\fraks} 
 \bigabs{Q^k_{nm}(z,\bar z)} 
 \lesssim \norm{z-\bar z}_\fraks^{\beta+\gamma-\abss{k}}
 (1\wedge t_+\wedge \bar t_+)^{(\eta-\gamma)/\fraks_0}
 \normDgamma{\Rplus f_2}_{\gamma,\eta;\bar\fraK}\;.
\end{equation}
For $\norm{z-\bar z}_\fraks < 2^{-n} \leqs \frac12(1\wedge t_+\wedge \bar
t_+)^{1/\fraks_0}$ and $\abss{k} < \alpha+\beta$, we use the fact that 
\begin{equation}
\label{eq:def_QKnm} 
 Q^k_{nm}(z,\bar z) = 
 \bigpscal{\cR^{nm}f_{nm}^+ - \Pi^{nm}_{\bar z}f_{nm}^+(\bar z)}{\hat
K^{Q,k;\gamma}_{nm;\bar zz}}\;, 
\end{equation} 
where $K^{Q,k;\gamma}_{nm;\bar zz}$ is defined as in~\eqref{eq:def_KQalphanm},
but with $\D^k\KQhat_{nm}$ instead of $K^Q_{nm}$. It thus admits the integral
representation 
\begin{equation}
\label{eq:KQkalpha_int} 
  K^{Q,k;\gamma}_{nm;\bar zz}(z') 
 = \sum_{\ell\in\partial A} \int_{\R^{d+1}} \D^{k+\ell}
\KQhat_{nm}(z+h-z')\cQ^\ell(\bar z-z,\6h)
\end{equation} 
where $A=\setsuch{\ell}{\abss{k+\ell} < \gamma+\beta}$. Here we use an argument
similar to the one used in the proof of Lemma~\ref{lem:modelIQ}. Owing to
lack of translation invariance, however, we have to decompose, writing $\tilde
z=z+h$, 
\begin{align}
\nonumber 
  \bigpscal{\cR^{nm}f_{nm}^+ - &\Pi^{nm}_{\bar z}f_{nm}^+(\bar z)}{\D^{k+\ell}
\KQhat_{nm}(\tilde z-\cdot)} \\
\nonumber 
={}& \bigpscal{\cR^{nm}f_{nm}^+ - \Pi^{nm}_{\tilde z}f_{nm}^+(\tilde
z)}{\D^{k+\ell}
\KQhat_{nm}(\tilde z-\cdot)} \\
&{}+ \bigpscal{\Pi^{nm}_{\tilde z}[f_{nm}^+(\tilde z) -
\Gamma_{\tilde z\bar z}f_{nm}^+(\bar z)]}{\D^{k+\ell} \KQhat_{nm}(\tilde
z-\cdot)}\;.
\end{align} 
For the first term on the right-hand side, we apply again the improved
reconstruction theorem~\cite[Lem.~6.7]{Hairer2014} to obtain a bound of order
$(1\wedge\tilde{t}_+)^{\,(\eta-\gamma)/\fraks_0}
2^{(\abss{k+\ell}-\fraks_0-\gamma-\beta)n}$. Since $\cQ^\ell(\bar z-z,\cdot)$ is
supported on values of $h$ such that $\norm{h}_\fraks\leqs\norm{z-\bar z}_\fraks
\leqs \frac12(1\wedge t_+\wedge \bar t_+)^{1/\fraks_0}$, we can replace
$\tilde t_+$ by $t_+\wedge\bar t_+$ in this expression. For the second term, we
use the fact that 
\begin{equation}
 \Gamma_{\tilde z\bar z}f_{nm}^+(\bar z)
= \bigbrak{a^+(\tilde z+h_{nm}) + (\tilde z_i - \bar z_i)a^+(\bar z+h_{nm})}\tau
+ a^+(\bar z+h_{nm})X_i\tau\;, 
\end{equation}
as well as 
\eqref{eq:bound_a1}, \eqref{eq:bound_psi} and~\eqref{eq:bound_Xipsi} to get the 
bound 
\begin{equation}
 \norm{\tilde z-z}_\fraks^{\gamma-\alpha} (1\wedge t_+\wedge \tilde
t_+)^{(\eta-\gamma)/\fraks_0} 2^{(\abss{k+\ell}-\fraks_0-\alpha-\beta)n}\;. 
\end{equation} 
Again, we can replace $\tilde t_+$ by $\bar t_+$, and also bound $\norm{\tilde
z-z}_\fraks^{\gamma-\alpha}$ by $2^{-(\gamma-\alpha)n}$. Adding the last two
bounds and using the fact that $\cQ^\ell(z-\bar z,\cdot)$ has total mass
$\norm{z-\bar z}_\fraks^{\abss{\ell}}$, we obtain 
\begin{equation}
\label{eq:bound_Qknm_01} 
 \bigabs{Q^k_{nm}(z,\bar z)} \lesssim  
 (1\wedge t_+\wedge\bar t_+)^{(\eta-\gamma)/\fraks_0} 
 \sum_{\ell\in\partial A} 
 \norm{z-\bar z}_\fraks^{\abss{\ell}}
 2^{(\abss{k+\ell}-\fraks_0-\gamma-\beta)n}
 \normDgamma{\Rplus f_2}_{\gamma,\eta;\bar\fraK}\;.
\end{equation} 
Summing over the relevant values of $(n,m)$, we again obtain a bound as in the
right-hand side of~\eqref{eq:boundQkmn1}. The same bound holds also in the case
$\abss{k} > \alpha+\beta$, by combining the previous arguments. 

In the last case $2^{-n} > \frac12(1\wedge t_+\wedge \bar t_+)^{1/\fraks_0}$, 
we have the bound 
\begin{equation}
 \bigabs{\bigpscal{\cR^{nm}f_{nm}^+ - \Pi^{nm}_{\bar z}f_{nm}^+(\bar 
z)}{\D^{k+\ell} \KQhat_{nm}(z-\cdot)}}
 \lesssim 2^{(\abss{k+\ell}-\fraks_0-\eta-\beta)n}
 \normDgamma{\Rplus f_2}_{\gamma,\eta;\bar\fraK}\;.
\end{equation} 
Indeed, such a bound holds separately for
$\abs{\pscal{\cR^{nm}f_{nm}^+}{\D^{k+\ell}
\KQhat_{nm}(z-\cdot)}}$ since $\cR^{nm}f_{nm}^+\in\cC^\eta_\fraks$, and for 
$\abs{\pscal{\Pi^{nm}_{\bar z}f_{nm}^+(\bar z)}{\D^{k+\ell}
\KQhat_{nm}(z-\cdot)}}$, as a consequence of~\eqref{eq:bounds_apsibnm} and the 
condition on $2^{-n}$. Substituting in the integral 
expression~\eqref{eq:KQkalpha_int} shows that 
\begin{align}
\nonumber
 \bigabs{Q^k_{nm}&(z,\bar z)} \\
\label{eq:bound_Qknm_02}
 &\lesssim  
 \sum_{\ell\in\partial A} \norm{z-\bar z}_\fraks^{\abss{\ell}} 
 2^{(\abss{k+\ell} - \fraks_0-\eta-\beta)n}
 \normDgamma{\Rplus f_2}_{\gamma,\eta;\bar\fraK} \\
&\lesssim  \norm{z-\bar z}_\fraks^{\beta+\gamma-\abss{k}}
 \sum_{\ell\in\partial A} (1\wedge t_+\wedge \bar
t_+)^{(\abss{k+\ell}-\beta-\gamma)/\fraks_0}
 2^{(\abss{k+\ell} - \fraks_0-\eta-\beta)n}
 \normDgamma{\Rplus f_2}_{\gamma,\eta;\bar\fraK}\;.
\nonumber
\end{align} 
Summing over the relevant values of $(n,m)$ yields again a bound of the
form~\eqref{eq:boundQkmn1}. This completes the proof of the fact that
$\Rplus\cK^Q_\gamma\Rplus f_2\in\cD^{\gamma,\eta}$. As before, a factor
$T^{\kappa/\fraks_0}$ can be extracted when $\fraK = O_T$, which also completes
the proof of~\eqref{eq:schauder2}. 

The proof of~\eqref{eq:schauder3} is very similar to the one just given, using
the estimate~\cite[(3.4)]{Hairer2014} of the reconstruction theorem in place 
of~\cite[(3.3)]{Hairer2014}.

\subsection{Convolution identity}

It remains to prove that the convolution identity~\eqref{eq:schauder1} holds. 
This will follow from the next result, combined with the reconstruction
theorem. 

\begin{lemma}
For every $z=(t,x)\in\R^{d+1}$ such that $t>0$, the bound 
\begin{equation}
 \bigabs{\bigpscal{\Pi_z\cK^Q_\gamma\Rplus f_2(z) - K^Q*\cR \Rplus f_2}
 {\psi^\lambda_z}} \lesssim \lambda^{\gamma+\beta} (1\wedge
t)^{(\eta-\gamma)/\fraks_0}
\label{eq:conv_bound} 
\end{equation} 
holds uniformly in $\lambda\in(0,1\wedge t^{1/\fraks_0}]$. 
\end{lemma}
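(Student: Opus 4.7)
The plan is to reduce~\eqref{eq:conv_bound} to a sum of local estimates indexed by $(n,m)\in\setnm$, exploiting the kernel decomposition of Proposition~\ref{prop:KQnm} and the shift construction of $\cK^Q_\gamma$. By linearity, $\cK^Q_\gamma\Rplus f_2 = \sumnm \cK^Q_{\gamma,nm}g_{nm}$ with $g_{nm} = \cT_{h_{nm}}\Rplus f_2 \in \cD^{\gamma,\eta}(\Pi^{nm})$, while the convolution decomposes as $K^Q * \cR\Rplus f_2(\bar z) = \sumnm (\KQhat_{nm}*\cR^{nm}g_{nm})(\bar z - h_{nm})$. Hence the task reduces to controlling, for each $(n,m)$, the per-scale error
\begin{equation*}
 E_{nm}(z,\lambda) := \bigabs{\bigpscal{\Pi_z\cK^Q_{\gamma,nm}g_{nm}(z)}{\psi^\lambda_z} - \bigpscal{\KQhat_{nm}*\cR^{nm}g_{nm}}{\psi^\lambda_{z+h_{nm}}}}
\end{equation*}
by constants summable over $\setnm$ to $\lambda^{\gamma+\beta}(1\wedge t)^{(\eta-\gamma)/\fraks_0}$.

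Next, I would unwrap the definition~\eqref{eq:def_KQf2} of $\cK^Q_{\gamma,nm}$ and the explicit model~\eqref{eq:model_IQnm} for $\cI^Q_{nm}\tau$, so that the expression inside $E_{nm}$ reduces to a combination of terms of the form $\pscal{\Pi^{nm}_z\tau}{\D^k\KQhat_{nm}(z-\cdot)}$ and $\pscal{\cR^{nm}g_{nm} - \Pi^{nm}_z g_{nm}(z)}{\D^k\KQhat_{nm}(z-\cdot)}$ multiplied by polynomial factors in $\bar z - z$. I would then split into two regimes, as in the proof of Lemma~\ref{lem:modelIQ}. For $2^{-n} > \lambda$, the moment-cancellation of $\psi^\lambda_z$ combined with a Taylor expansion in the spirit of~\eqref{eq:KQalpha_int} and the kernel bounds~\eqref{eq:KQnm_2}--\eqref{eq:KQnm_3}, together with~\eqref{eq:bound_psi}--\eqref{eq:bound_bk} and~\eqref{eq:bound_a1}, yields per-$(n,m)$ estimates whose sum over this regime contributes at most $\lambda^{\gamma+\beta}(1\wedge t)^{(\eta-\gamma)/\fraks_0}$. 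For $2^{-n}\leqs\lambda$, the decomposition~\eqref{eq:decomp_YZ} combined with~\eqref{eq:bounds_YZ} and the improved reconstruction theorem~\cite[Lem.~6.7]{Hairer2014} applied to $g_{nm}$ yields the analogous contribution over the complementary regime.

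The main obstacle is summability: because the shift indices $(n,m)$ at fixed scale $n$ have cardinality of order $2^{\fraks_0 n}$, the extra factor $2^{-\fraks_0 n}$ in~\eqref{eq:KQnm_2} and~\eqref{eq:KQnm_3} must be threaded through the computation at every step, and is precisely what makes the total sum convergent. A secondary subtlety is that the shifted test function $\psi^\lambda_{z+h_{nm}}$ is effectively recentered at $z+h_{nm}$, which may lie closer to the initial time boundary; one has to check that the reconstruction bounds for $\cR^{nm}g_{nm}$ produce the correct $(1\wedge t)^{(\eta-\gamma)/\fraks_0}$ weight uniformly in $(n,m)$, which follows from the hypothesis $t>0$ and the fact that $h_{nm} = (m2^{-\fraks_0 n},0)$ is a mostly non-negative time shift for the bulk of indices.
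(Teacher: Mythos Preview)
Your proposal is essentially correct and follows the same strategy as the paper --- decompose over $(n,m)$, split by the relation between $2^{-n}$ and $\lambda$, and invoke the Taylor-remainder bounds for large scales together with the $Y^\lambda_{nm}/Z^\lambda_{nm;\ell}$ decomposition~\eqref{eq:decomp_YZ} for small scales.

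The paper's argument is, however, more economical in one respect you may want to adopt: rather than shifting the test function to $\psi^\lambda_{z+h_{nm}}$ and tracking the resulting time-boundary issues (your ``secondary subtlety''), the paper keeps $\psi^\lambda_z$ fixed and observes that the integrand against it is exactly $-Q^0_{nm}(\bar z,z)$, i.e.\ the $k=0$ instance of the two-point quantity~\eqref{eq:def_QKnm} already estimated in the Schauder section. This allows the bounds~\eqref{eq:bound_Qknm_01} and~\eqref{eq:bound_Qknm_02} to be reused verbatim, so the lemma becomes a short corollary of work already done. Your rederivation would reproduce those same estimates from scratch. Note also that the paper's proof distinguishes \emph{three} regimes for $2^{-n}\geqs\lambda$ (splitting further at $\tfrac12(1\wedge t\wedge\bar t)^{1/\fraks_0}$), because the reconstruction-type bound~\eqref{eq:bound_bk} alone is not enough when $2^{-n}$ exceeds the distance to the time boundary --- there one instead uses that $\cR^{nm}f^+_{nm}\in\cC^\eta_\fraks$, as in~\eqref{eq:bound_Qknm_02}. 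Your two-regime split glosses over this case.
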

\begin{proof}
Using the representation 
\begin{equation}
 \bigpscal{K^Q*\cR \Rplus f_2}{\psi^\lambda_z}
 = \sumnm \int_{\R^{d+1}} \bigpscal{\cR^{nm} f_{nm}^+}{\KQhat_{nm}(\bar
z-\cdot)} \psi^\lambda_z(\bar z)\6z
\end{equation} 
and the definition~\eqref{eq:model_IQnm} of the model, we obtain 
\begin{equation}
 \bigpscal{\Pi_z\cK^Q_\gamma\Rplus f_2(z) - K^Q*\cR \Rplus f_2}
 {\psi^\lambda_z} 
 = -\sumnm \int_{\R^{d+1}} Q^0_{nm}(\bar z,z)\psi^\lambda_z(\bar z)\6\bar z\;,
\end{equation} 
with $Q^0_{nm}$ as in~\eqref{eq:def_QKnm}. Since $\psi^\lambda_z$ is
supported in the set $\setsuch{\bar z}{\norm{\bar z-z}_\fraks \leqs \lambda}$,
whenever $\lambda\leqs 2^{-n}\leqs\frac12(1\wedge t\wedge \bar
t\,)^{1/\fraks_0}$, \eqref{eq:bound_Qknm_01} provides the bound 
\begin{equation} 
 \bigabs{Q^0_{nm}(\bar z,z)} \lesssim
 (1\wedge t\wedge \bar t\,)^{(\eta-\gamma)/\fraks_0}
 \sum_{\ell\in\partial A} 
 \norm{z-\bar z}_\fraks^{\abss{\ell}}
 2^{(\abss{\ell}-\fraks_0-\gamma-\beta)n}
 \normDgamma{\Rplus f_2}_{\gamma,\eta;\bar\fraK}
\end{equation}
where $A=\setsuch{\ell}{\abss{\ell}<\gamma+\beta}$. 
For $2^{-n} > \frac12(1\wedge t\wedge \bar t\,)^{1/\fraks_0}$, 
\eqref{eq:bound_Qknm_02} yields 
\begin{equation} 
 \bigabs{Q^0_{nm}(\bar z,z)} \lesssim  
 \norm{z-\bar z}_\fraks^{\beta+\gamma} 
 \sum_{\ell\in\partial A} 
 (1\wedge t\wedge \bar t\,)^{(\abss{\ell}-\gamma-\beta)/\fraks_0}
 2^{(\abss{\ell}-\fraks_0-\eta-\beta)n}
 \normDgamma{\Rplus f_2}_{\gamma,\eta;\bar\fraK}\;.
\end{equation}
Combining this with~\eqref{eq:bound_integral_power}, we obtain 
\begin{equation}
\label{eq:sum_small_n} 
 \sumnmarg{2^{-n}\geqs\lambda} \biggabs{\int_{\R^{d+1}} Q^0_{nm}(\bar
z,z)\psi^\lambda_z(\bar z)\6\bar z} \lesssim \lambda^{\gamma+\beta}
(1\wedge t)^{(\eta-\gamma)/\fraks_0}\;.
\end{equation} 
Finally, in the case $2^{-n} < \lambda$, we use the same argument as
in~\eqref{eq:decomp_YZ}, yielding 
\begin{equation}
 \int_{\R^{d+1}} Q^0_{nm}(\bar z,z) \psi^\lambda_z(\bar z)\6\bar z 
 = \bigpscal{\cR^{nm}f^+_{nm} - \Pi^{nm}_z f^+_{nm}(z)}
 {Y^\lambda_{nm} - \sum_{\abss{\ell} < \gamma+\beta} Z^\lambda_{nm;\ell}}\;.
\end{equation} 
Here we obtain bounds similar to~\eqref{eq:bounds_YZ}, but with $\gamma$
instead of $\alpha$ and an extra factor $(1\wedge t)^{(\eta-\gamma)/\fraks_0}$.
Summing over $m$ and $n$ yields again a bound as on the right-hand side
of~\eqref{eq:sum_small_n}. 
\end{proof}

Combining~\eqref{eq:conv_bound} with~\cite[Lem.~6.7]{Hairer2014}, we obtain
\begin{equation}
  \bigabs{\bigpscal{\cR\cK^Q_\gamma\Rplus f_2(z) - K^Q*\cR \Rplus f_2}
 {\psi^\lambda_z}} \lesssim \lambda^{\gamma+\beta} (1\wedge
t)^{(\eta-\gamma)/\fraks_0}\;,
\end{equation} 
which proves the convolution identity by the uniqueness part of the
reconstruction theorem. 

To complete the proof of Theorem~\ref{thm:Schauder}, we have to show that
$(K^Q*\cR\Rplus f_3)(z) = 0$ for all $z=(t,x)$ such that $t>0$. Here we use
the fact that 
\begin{equation}
 \bigpscal{K^Q*\cR \Rplus f_3}{\psi^\lambda_z}
 = \sumnm \int_{\R^{d+1}} \bigpscal{\cR\Rplus f_3}{\psi^\lambda_z}
 K^Q_{nm}(z-\bar z)\6\bar z\;.
\end{equation} 
Since by the diagonal identity~\eqref{eq:diag_ident}
\begin{equation}
 \lim_{\lambda\to0} \bigpscal{\Pi_z\Rplus f_3(z)}{\psi^\lambda_z} 
 = \lim_{\lambda\to0} \sum_{\tau\in\cF_3} a^+_\tau(z) 
 \bigpscal{\Pi_z \tau}{\psi^\lambda_z} = 0\;,
\end{equation} 
the reconstruction theorem implies that 
$\pscal{\cR \Rplus f_3}{\psi^\lambda_z}$ converges to $0$ as well, and the
desired conclusion follows. 
\qed


\section{Proofs for Section~\ref{sec:renormalisation}}
\label{app:renorm} 


\subsection{Proof of Lemma~\ref{lem:I00}}
\label{app:renorm1} 

Applying Proposition~\ref{prop:KQnm} with $\abs{\fraks}=5$, $\fraks_0=2$ and 
$\beta=2$, we find that $K^Q_{nm}(t,x)$ is supported in the ball  
$\norm{(t,x)-(m2^{-2n},0)}_\fraks \leqs (1+\sqrt{2})2^{-n}$, and is of order 
$2^n$. Using the bound on $\cQ^\eps_0(z)$ given in~\cite[Lem.~6.2]{BK2016}, it 
follows that 
\begin{align}
 \bigabs{I^Q_{00;mn}(\eps)} 
& \lesssim \int_{(m-1)2^{-2n}}^{(m+1)2^{-2n}} 
\int_{\norm{x}\lesssim 2^{-n}} 2^n\frac{1}{\abs{t}+\norm{x}^2+\eps^2} \6x \6t \\
& \lesssim 2^n \int_{(m-1)2^{-2n}}^{(m+1)2^{-2n}} 
\int_0^{2^{-n}}\frac{r^2\6r}{\abs{t}+r^2+\eps^2} \6t\;,
\end{align} 
where we have used polar coordinates, and the equivalence of the $\ell^1$ norm 
$\abs{x}$ and the Euclidean norm $\norm{x}$. For $m<2$, we 
obtain a bound of order $2^{-2n}$ by bounding $\abs{t}+r^2+\eps^2$ below by 
$r^2$. For $m\geqs2$, bounding $\abs{t}+r^2+\eps^2$ below by 
$r^2 + (m-1)2^{-2n}$ yields a bound of order $2^{-2n}m^{-1} \leqs 
2^{1-2n}(m+2)^{-1}$. 
\qed


\subsection{Proof of Proposition~\ref{prop:Fhat}}
\label{app:renorm2} 

It suffices to apply the renormalisation map $M_\eps$ to all monomials in $U$ 
and $V$ of degree $2$ and $3$, when $U$ and $V$ are given 
by~\eqref{eq:UV_expansion}. Using the expressions~\cite[(6.12)]{BK2016} for the 
action of $M_\eps$, one obtains 
\begin{align}
\nonumber
M_\eps U^2 &= U^2 - C_1(\eps)\unit\;, \\
\nonumber
M_\eps U^3 &= U^3 - 3\bigbrak{\varphi C_1(\eps) + b_1 
C_2(\eps)}\unit - 3\bigbrak{C_1(\eps)+3\gamma_1 C_2(\eps)} \RSI
- 9\gamma_2 C_2(\eps) \RSoI 
+ \varrho_{U^3}(U,V)\;, \\
M_\eps U^2 V &= U^2 V - \psi C_1(\eps) \unit - C_1(\eps) \RSoI 
+ \varrho_{U^2V}(U,V)\;, 
\end{align}
where $\varrho_{U^3}(U,V)$ and $\varrho_{U^2V}(U,V)$ are remainder terms of 
strictly positive homogeneity. All other monomials are invariant under $M_\eps$ 
up to remainders of strictly positive homogeneity. The result follows, using 
the expression~\eqref{eq:bc} for $b_1$ with $p=\varphi$ and $q=\psi$, and 
the expansion~\eqref{eq:UV_expansion} in order to express $\RSI$ and $\RSoI$ in 
terms of $U$ and $V$.
\qed


{\small
\bibliography{../BK}
\bibliographystyle{abbrv}}



\goodbreak

\vfill

\bigskip\bigskip\noindent
{\small 
Nils Berglund \\
Institut Denis Poisson (IDP) \\ 
Universit\'e d'Orl\'eans, Universit\'e de Tours, CNRS -- UMR 7013 \\
B\^atiment de Math\'ematiques, B.P. 6759\\
45067~Orl\'eans Cedex 2, France \\
{\it E-mail address: }{\tt nils.berglund@univ-orleans.fr}
}

\bigskip\noindent
{\small 
Christian Kuehn \\
Technical University of Munich (TUM) \\
Faculty of Mathematics \\
Boltzmannstr. 3 \\
85748 Garching bei M\"unchen, Germany \\
{\it E-mail address: }{\tt ckuehn@ma.tum.de}
}

\end{document}